\newtheorem{theorem}{Theorem}[section]
\newtheorem{lemma}[theorem] {Lemma}
\newtheorem{definition}[theorem]{Definition}
\newtheorem{corollary}[theorem]{Corollary}
\theoremstyle{remark}
\numberwithin{equation}{section}
\title{On the functional equations for polylogarithms in one variable.}
\author{Daniil Rudenko}
\begin{document}
\maketitle
\begin{abstract}
We develop a new approach to the study of the functional equations satisfied by classical polylogarithms, inspired by Goncharov's conjectures. We prove a sharpened version of Zagier's criterion for such an equation and explain, how our approach leads to a very simple description of the equations in one variable, satisfied by dilogarithm and trilogarithm. 

Our main result is the complete description of the functional equations for weight four polylogarithm in one variable. 

\end{abstract}
\parskip 9pt
{\bf Acknowledgments}.
I would like to thank S. Bloch,  A. Suslin, A. Beilinson and A. Goncharov for invaluable discussions and suggestions. My special gratitude goes to H. Gangl for showing me his computations for polylogarithms in weight four.

\section{Introduction.}

\begin{definition}
The classical polylogarithm is a function on the unit disc $|z|<1$ given by a convergent power series
$$
Li_n(z)=\sum_{k=1}^{\infty}\frac{z^k}{k^n}
$$
and continued analytically to a universal covering of $\mathbb{C}P^1 \setminus \{0,1,\infty\}$ by an integral representation
$$
Li_n(z):=\int_{0}^{z}Li_{n-1}(t) \frac{dt}{t}, \\\\\ Li_1(z)=-log(1-z).
$$
The integer number $n$ is called the {\it weight} of the polylogarithm.  
\end{definition}

Our primary goal will be to study functional equations, satisfied by this function. To simplify their analysis it is instructive to neglect the terms, which are products of polylogarithms of lower weight. Another way to obtain similar functional equation is to consider "corrected" version of polylogarithm, which satisfies "clean" functional equations, defined by Zagier in \cite{Z}.

\begin{definition}
Zagier polylogarithm is defined by the following rule:
$$\mathcal{L}i_1(z)=log|z|,$$
for odd $n>1$ 
$$\mathcal{L}i_n(z)=\Re \left ( \sum_{k=0}^{n-1}\beta_k log^k|z| \cdot Li_{n-k}(z) \right ),$$
for even positive $n$
$$\mathcal{L}i_n(z)=\Re \left ( \sum_{k=0}^{n-1}\beta_k log^k|z| \cdot Li_{n-k}(z) \right ),$$
where $\dfrac{2x}{e^{2x}-1}=\sum_{k=0}^{\infty}\beta_k x^k$.
\end{definition}

Zagier polylogarithm is known to be single valued and continuous  on $\mathbb{CP}_1$ and real analytic on $\mathbb{CP}_1 \setminus \{0,1,\infty \}.$ Next, we define an abelian group, generated by single-variable functional equations, satisfied by $\mathcal{L}i_n(z)$. For arbitrary set $S$ denote by $\mathbb{Q}[S]$ a vector space with a basis, indexed by elements of $S.$

\begin{definition}
The group of single-variable functional equations for $\mathcal{L}i_n(z)$ is denoted by $\mathcal{A}_n.$ It is a sub-vector space of  $\mathbb{Q}[\mathbb{C}(t)]$ and characterized by the following property:

For $n_i \in \mathbb{Q}$ and $\varphi_i \in \mathbb{C}(t)$ the sum $\sum_{i=1}^{k} n_i [\varphi_i(t)]$ lies in  $\mathcal{A}_n$ if and only if $\sum_{i=1}^{k} n_i \mathcal{L}i_n(\varphi_i(t))=const.$ 
We will denote the inclusion map from $\mathcal{A}_n$ to $\mathbb{Q}[\mathbb{C}(t)]$ by $I_n.$
\end{definition}

In this work we will  describe groups $\mathcal{A}_n$ for weight less than  or equal to four.
The first result is a complete criterion for an element of  $\mathbb{Q}[\mathbb{C}(t)]$ to lie in $\mathcal{A}_n$. This is a sharpened version of Zagier's proposition 1 in \cite{Z}. To state it we need to introduce several axillary definitions.

Denote by $A$ a $\mathbb{Q}-$vector space of zero degree divisors on $\mathbb{CP}^1.$ For a function $\varphi (t)$ denote by $(\varphi)$ it's divisor. We will use inhomogeneous notation, where $[\infty] = 0 $ in $A$ and $A$ is a free vector space with the basis on all points of $\mathbb{A}^1_{\mathbb{C}}.$ Next, for each $p \in \mathbb{C}$ denote by $\psi_p$ an element of  $A^*$ dual to $p \in A.$ On a function $\varphi(t)$ it equals the order of zero (pole) of $\varphi$ in $p.$

Next, for vector space $V$ denote by $Sym^i V$ it's $i-$th symmetric power and by  $\bigwedge^i V$ it's $i-$th wedge power. Let $\mathbb{S}_*^{n-1,1}V$ be the following functor: $\mathbb{S}_*^{n-1,1}V \cong \dfrac{Sym^{n-1}V \otimes V}{ Sym^n{V}}.$ For $n=1$ we put $\mathbb{S}_*^{0,1}V \cong V.$  We used this symbol, because $\mathbb{S}_*^{n-1,1}(V)$ is dual to $\mathbb{S}^{2 1^{n-2}}V^*$ for Schur functor $\mathbb{S}^{2 1^{n-2}}.$ There is a map $J_n\colon \mathbb{Q}[\mathbb{C}(t)] \longrightarrow \mathbb{S}_*^{n-1,1} A$ defined by the formula $J_n([\varphi])=(\varphi)^{n-1} \otimes (1-\varphi).$ For instance, 
$$J_2[2t-t^2]=([0]+[2]-2[\infty]) \wedge (2[1]-2[\infty])=2[0]\wedge [1]+2[2]\wedge [1],$$
$$J_4[t-2]=([2]-[\infty])^3 \otimes ([3]-[\infty])=[2][2][2]\otimes [3].$$
We will sometimes put a subscript $n$ in symbol $\mathbb{Q}[\mathbb{C}(t)]_n,$ just to keep track of the weight of polylogarithm we will apply to rational functions in $\mathbb{Q}[\mathbb{C}(t)].$

\begin{theorem} \label{first}
The following sequence is exact
$$
0 \longrightarrow \mathcal{A}_n \stackrel{I_n}{\longrightarrow}  \mathbb{Q}[\mathbb{C}(t)]_n \stackrel{J_n}{\longrightarrow}  \mathbb{S}_*^{n-1,1} A.
$$
\end{theorem}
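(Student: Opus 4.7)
The plan is to establish the two nontrivial inclusions making up the exact sequence: (i) $J_n \circ I_n = 0$ (necessity of the divisor condition) and (ii) $\ker J_n \subseteq I_n(\mathcal{A}_n)$ (sufficiency). Injectivity of $I_n$ is immediate from the definition of $\mathcal{A}_n$. The sharpening over Zagier's Proposition 1 lies in direction (ii): showing that the divisor-level condition is not merely necessary but also sufficient.

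For necessity, I would analyze the local asymptotic expansion of $\mathcal{L}i_n(\varphi(t))$ at each point $p \in \mathbb{CP}^1$. The singular contributions arise only at the finite set of points $p$ where some $\varphi_i(p) \in \{0,1,\infty\}$. Using the power series expansion of $Li_n$ near $0$, the inversion formula near $\infty$, and the explicit logarithmic branching near $1$ --- together with the fact that Zagier's constants $\beta_k$ are precisely calibrated to eliminate parasitic products of lower-weight logarithms --- the leading singular term of $\mathcal{L}i_n(\varphi)$ at $p$ depends only on the local orders $\psi_p(\varphi)$, $\psi_p(1-\varphi)$ and on the global divisors $(\varphi)$ and $(1-\varphi)$ away from $p$. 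Collecting these leading coefficients as $p$ ranges over $\mathbb{C}$ reconstructs precisely the image of $[\varphi]$ under $J_n$ inside $\mathbb{S}_*^{n-1,1} A$, with the quotient by $\mathrm{Sym}^n A$ absorbing the shuffle-type identifications inherent in this expansion. Vanishing of $\sum n_i \mathcal{L}i_n(\varphi_i)$ modulo constants then forces $\sum n_i J_n[\varphi_i] = 0$ componentwise.

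For sufficiency, suppose $J_n(\xi) = 0$ for $\xi = \sum n_i [\varphi_i]$. Cancellation of all leading singular coefficients from step (i) ensures that $F(t) := \sum n_i \mathcal{L}i_n(\varphi_i(t))$ extends continuously to all of $\mathbb{CP}^1$ and is real-analytic off a finite set; single-valuedness of $\mathcal{L}i_n$ guarantees $F$ is well defined globally. I would then proceed by induction on $n$, with base $n = 2$ handled directly via the Bloch--Wigner identity. Differentiating in $t$ yields an expression of the form $\partial_t F = \sum n_i \mathcal{L}i_{n-1}(\varphi_i) \cdot \varphi_i'/\varphi_i$ modulo Zagier corrections, and the divisor condition $J_n(\xi) = 0$ translates, under this differentiation, into the corresponding weight-$(n-1)$ kernel condition for the coefficients appearing in $\partial_t F$. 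The inductive hypothesis forces $\partial_t F = 0$; combining with the analogous $\bar\partial_t$ computation and continuity on compact $\mathbb{CP}^1$, one concludes $F$ is constant.

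The main obstacle is the reduction step in (ii): I need to verify cleanly that vanishing of $(\varphi)^{n-1} \otimes (1-\varphi)$ at weight $n$ descends, under differentiation in $t$, to the analogous vanishing condition for $J_{n-1}$ applied to the derivative combination, and that the Zagier corrections line up so that no spurious lower-weight obstructions survive. This is precisely where the symmetrization built into the target $\mathbb{S}_*^{n-1,1} A$ pays off: the quotient by $\mathrm{Sym}^n A$ encodes exactly the shuffle identities that are needed for the recursion to close, and verifying this compatibility is the technical heart of the argument.
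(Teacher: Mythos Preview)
Your inductive scaffolding matches the paper's: both argue by induction on $n$ and both recognize that the heart of the matter is the sufficiency direction. However, the mechanism you propose for closing the induction has a gap, and the paper closes it by a different route.

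The paper introduces, for each $p\in\mathbb{C}$, a \emph{formal} operator $D_p$ on $\mathbb{Q}[\mathbb{C}(t)]_n$ (sending $[\varphi]_n$ to $\psi_p(\varphi)[\varphi]_{n-1}$ for $n\ge 3$, with a modified formula for $n=2$) and a compatible $D_p$ on $\mathbb{S}_*^{n-1,1}A$, checks $D_p\circ J_n=J_{n-1}\circ D_p$, and proves that $\bigoplus_p D_p$ is injective on $\mathbb{S}_*^{n-1,1}A$. The theorem then reduces to the key Lemma: $F\in\mathcal{A}_n$ if and only if $D_pF\in\mathcal{A}_{n-1}$ for every $p$. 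The ``only if'' half is analytic, essentially what you sketch for necessity (singularity analysis of $d\mathcal{L}i_n(F)/dt$, citing Goncharov). The ``if'' half is \emph{not} proved by showing $\partial_t F=0$ analytically. Instead, the paper reduces to Zagier's sufficient criterion in the multiplicative group $\mathbb{C}(t)^*_{\mathbb{Q}}$: one must show $\sum n_i[\varphi_i]^{n-2}\otimes[\varphi_i]\wedge[1-\varphi_i]$ is constant in $\mathrm{Sym}^{n-2}\mathbb{C}(t)^*_{\mathbb{Q}}\otimes\bigwedge^2\mathbb{C}(t)^*_{\mathbb{Q}}$. This is done by splitting the dual of $\mathbb{C}(t)^*_{\mathbb{Q}}$ into ``valuational'' functionals $\psi_p$ and ``constantial'' functionals coming from $\mathbb{C}^*_{\mathbb{Q}}$, inducting on the number of constantial slots, and invoking the algebraic identity
\[
\alpha(\varphi)\psi_p(1-\varphi)-\alpha(1-\varphi)\psi_p(\varphi)=\sum_{q}\alpha(p-q)\bigl(\psi_p(\varphi)\psi_q(1-\varphi)-\psi_p(1-\varphi)\psi_q(\varphi)\bigr),
\]
which converts one constantial slot into valuational ones. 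This identity is the technical core; it is exactly what bridges the gap between the divisor-level condition $J_n(\xi)=0$ and Zagier's multiplicative condition.

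Your analytic route does not close. Writing $\varphi_i'/\varphi_i=\sum_p\psi_p(\varphi_i)/(t-p)$, the derivative decomposes (schematically) as $\partial_t F=\sum_p\frac{1}{t-p}\bigl(\sum_i n_i\psi_p(\varphi_i)\mathcal{L}i_{n-1}(\varphi_i)\bigr)+(\text{Zagier corrections})$. The inductive hypothesis, applied to each $D_p\xi$, tells you only that the inner sum equals some constant $c_p$, not that it vanishes; you are left with $\partial_t F=\sum_p c_p/(t-p)+\cdots$, and nothing in your outline forces the $c_p$ to be zero or the corrections to cancel. The quotient by $\mathrm{Sym}^nA$ does not handle this --- it is a purely divisor-level symmetrization and says nothing about the leading constants of $\varphi_i$ and $1-\varphi_i$, which is precisely what the constantial functionals in the paper's argument track. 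In short, your ``main obstacle'' is real, and its resolution is the algebraic identity above together with Zagier's criterion, not a shuffle compatibility built into $\mathbb{S}_*^{n-1,1}A$.
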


The statement of this result can be rephrased as follows: $\sum_{i=1}^{k} n_i \mathcal{L}i_n(\varphi_i(t))=const$ if and only if  $\sum_{i=1}^{k} (\varphi_i(t))^{n-1} \otimes (1-\varphi_i(t))=0$ in $\mathbb{S}_*^{n-1,1} A.$  Using the duality between $\mathbb{S}_*^{n-1,1} $ and the corresponding Schur functor one can give yet another formulation of our result: $$\sum_{i=1}^{k} n_i \mathcal{L}i_n(\varphi_i(t))=const$$  if and only if for arbitrary $p_1,...,p_n \in \mathbb{C}$ 
$$\sum_{i=1}^k n_i  \psi_{p_1}(\varphi_i)\cdot... \cdot  \psi_{p_{n-2}}(\varphi_i)\cdot \left ( \psi_{p_{n-1}}(\varphi_i)  \psi_{p_{n}}(1-\varphi_i) - \psi_{p_{n-1}}(1-\varphi_i)  \psi_{p_{n}}(\varphi_i) \right ) =0.$$

Our next result completely describes groups $\mathcal{A}_i$ for $i \leq 3.$

\begin{theorem} \label{second}
The maps  $J_1$, $J_2$ and $J_3$ are surjective. Equivalently,
$$\dfrac{\mathbb{Q}[\mathbb{C}(t)]_1}{\mathcal{A}_1} \cong A, \dfrac{\mathbb{Q}[\mathbb{C}(t)]_2}{\mathcal{A}_2} \cong A \wedge A, \dfrac{\mathbb{Q}[\mathbb{C}(t)]_3}{\mathcal{A}_3} \cong \dfrac{Sym^2A \otimes A}{Sym^3 A}.$$
\end{theorem}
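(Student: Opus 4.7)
By Theorem~\ref{first}, the sequence
$$0 \to \mathcal{A}_n \to \mathbb{Q}[\mathbb{C}(t)]_n \stackrel{J_n}{\to} \mathbb{S}_*^{n-1,1}A$$
is exact, so $\mathbb{Q}[\mathbb{C}(t)]_n/\mathcal{A}_n$ is canonically isomorphic to the image of $J_n$. The stated isomorphisms are therefore equivalent to the surjectivity of $J_1, J_2, J_3$, and the plan is to prove each by exhibiting explicit families of rational functions whose $J_n$-images span the target.

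For $n=1$, every zero-degree divisor on $\mathbb{CP}^1$ is principal, so given $D \in A$ I choose $\psi \in \mathbb{C}(t)$ with $(\psi) = D$ and set $\varphi := 1-\psi$; then $J_1([\varphi]) = (1-\varphi) = (\psi) = D$. For $n=2$, the functions $\varphi_{p,q}(t) := (t-p)/(t-q)$ satisfy $(\varphi_{p,q}) = [p]-[q]$ and $1-\varphi_{p,q} = (p-q)/(t-q)$, so $(1-\varphi_{p,q}) = -[q]$; hence $J_2([\varphi_{p,q}]) = ([p]-[q]) \wedge (-[q]) = -[p]\wedge[q]$, and these span $A\wedge A$.

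The main case is $n=3$, handled by combining two families. First, $\varphi(t) = 1 - c(t-p)$ (for $c \in \mathbb{C}^*$) has $(\varphi) = [p+c^{-1}]$ and $(1-\varphi) = [p]$, so $J_3([\varphi]) = [r]^2 \otimes [p]$ with $r = p+c^{-1}$ arbitrary in $\mathbb{C}$; this places every element of the form $[r]^2 \otimes [p]$ in the image. Second, $\varphi(t) = a(t-p)/(t-q)$ with $a \in \mathbb{C} \setminus \{0,1\}$ gives $(\varphi) = [p]-[q]$ and $(1-\varphi) = [r]-[q]$ with $r = (q-ap)/(1-a)$ arbitrary. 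Expanding
$$J_3([\varphi]) = ([p]-[q])^2 \otimes ([r]-[q])$$
in $\mathbb{S}_*^{2,1}A$, every term of the form $[x]^2\otimes[y]$ already lies in the image by the first family, while the cross term $2[p][q]\otimes[q]$ equals $-[q]^2\otimes[p]$ by the $Sym^3$ relation $2[x][y]\otimes[y] + [y]^2\otimes[x] = 0$. Hence $-2[p][q]\otimes[r]$ lies in the image modulo the first family, so every generator $[p][q]\otimes[r]$ of $\mathbb{S}_*^{2,1}A$ is in the image of $J_3$.

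The main obstacle is this weight-three computation: one must verify that combining the two explicit families together with the $Sym^3$ quotient recovers all generators. The key identity is the $Sym^3$ relation, which is precisely what allows the otherwise troublesome cross-term $[p][q]\otimes[q]$ to be folded back into the simpler first family.
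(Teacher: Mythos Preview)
Your proof is correct and follows essentially the same approach as the paper. The paper uses slightly different explicit functions---$\varphi=\tfrac{t-a}{b-a}$ in place of your $\tfrac{t-p}{t-q}$, and the cross-ratio $\tfrac{(t-a)(c-b)}{(t-b)(c-a)}$ for the weight-three second family---and organizes the $n=3$ argument as a filtration of $\mathbb{S}_*^{2,1}A$ by the number of distinct basis elements appearing in a monomial, but your ``modulo the first family'' reduction is the same logic with the filtration left implicit, and the key $Sym^3$ identity $2[x][y]\otimes[y]+[y]^2\otimes[x]=0$ plays the identical role in both.
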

From the proof of this result we will derive  two corollaries. The first one is so-called Roger's identity, claiming that for arbitrary rational function $f(t)$ 
$$\mathcal{L}i_2(f(t))= \sum_{a,b \in \mathbb{C}} \psi_a(f) \psi_b(1-f) \mathcal{L}i_2 \left ( \dfrac{t-a}{b-a} \right )+const.$$
For linear fractional $f(t)$ this identity reduces to the five-term relation.

The second identity, which can be found in works of Wojtkowiak, claims that for arbitrary rational function $f(t)$  such that both functions $f(t)$ and $1-f(t)$ are finite and invertible at $t=\infty,$
$$\mathcal{L}i_3(f(t)) = -\dfrac{1}{2}\sum_{a,b,c \in \mathbb{C}} \psi_a(f) \psi_b(f)  \psi_c(1-f) \mathcal{L}i_3 \left ( \dfrac{(t-a)(c-b)}{(t-b)(c-a)} \right) + const.$$
For $f(t)=\dfrac{(t-a)^2}{(t-b)(t-c)}$ this reduces to the classical Kummer's equation and for $f(t)=\dfrac{(t-a)(t-b)}{(t-c)(t-d)}$ this gives  Goncharov's equation.

Our last and the most novel result describes the image of the map $J_4.$ To give its formulation we need to give several more definitions. For four points $a,b,c,d \in \mathbb{CP}^1$ let us denote their cross-ratio by $r(a,b,c,d)=\dfrac{(a-b)(c-d)}{(c-b)(a-d)}.$
Then, $B_2(\mathbb{C})$ is a vector space $\mathbb{Q}[\mathbb{C}]/\mathcal{R}_2(\mathbb{C}),$ where $\mathcal{R}_2(\mathbb{C}) $ is generated by symbols $0$, $1$, $\infty$ and linear combinations 
$\sum_{i=0}^4 (-1)^i r(x_0,..., \hat{x_i}, ..., x_4).$ Note that every element of  $\mathcal{R}_2(\mathbb{C})$ gives rise to a relation between values of   $\mathcal{L}i_2$. This follows from the so-called five-term relation, found by Abel: 

$$\sum_{i=0}^4 (-1)^i \mathcal{L}i_2(r(x_0,..., \hat{x_i}, ..., x_4))=0.$$
Element of $B_2(\mathbb{C}),$ coming from $[x]\in \mathbb{Q}[\mathbb{C}]$ will be denoted by $\{x\}_2$.

\begin{theorem} \label{third}
The following sequence is exact:

$$0 \longrightarrow \mathcal{A}_4 \stackrel{I_4}{\longrightarrow}  \mathbb{Q}[\mathbb{C}(t)]_4 \stackrel{J_4}{\longrightarrow}  \mathbb{S}_*^{3,1} A  \stackrel{K_4}{\longrightarrow} (A \wedge A ) \otimes B_2(\mathbb{C}) \longrightarrow 0,$$
where 

$K_4(x_1 x_2 x_3 \otimes y)= \sum_{\sigma \in S_3} x_{\sigma(1)} \wedge x_{\sigma(2)} \otimes \{r(x_{\sigma(1)},y,x_{\sigma(3)}, \infty) \}_2.$
\end{theorem}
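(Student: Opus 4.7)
The plan is to reduce the claim to three independent assertions: exactness at $\mathcal{A}_4$ and at $\mathbb{Q}[\mathbb{C}(t)]_4$ is immediate from Theorem \ref{first}, so what remains is $K_4\circ J_4 = 0$, surjectivity of $K_4$, and the inclusion $\ker K_4 \subseteq \mathrm{Im}\, J_4$. I would treat these in increasing order of difficulty.

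To verify $K_4\circ J_4 = 0$, I substitute $J_4([\varphi]) = (\varphi)^3\otimes(1-\varphi)$ and expand. Writing $(\varphi) = \sum_a \alpha_a[a]$ and $(1-\varphi) = \sum_c \gamma_c[c]$, the coefficient of a fixed wedge $[a]\wedge[b]$ in $K_4(J_4([\varphi]))$ becomes a weighted $B_2$-combination of classes of the form $\{(a-c)/(d-c)\}_2 - \{(b-c)/(d-c)\}_2$, using that $r(a,c,d,\infty) = (a-c)/(d-c)$. Vanishing of this combination should follow from Abel's five-term relation: the five points determining each relation are the fixed pair $a,b$ together with a running triple taken from the remaining zeros of $\varphi$ and of $1-\varphi$, and the signs supplied by symmetrization must match Abel's alternating signs.

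For surjectivity of $K_4$, given an arbitrary pure tensor $[\alpha]\wedge[\beta]\otimes\{x\}_2$ I choose auxiliary $\gamma, c \in \mathbb{C}$ with $r(\alpha,c,\gamma,\infty) = x$. Then $K_4([\alpha][\beta][\gamma]\otimes[c])$ equals the target plus five correction terms of the same shape but with different wedges and different cross-ratios. Antisymmetry of $\wedge$, the standard $B_2$-symmetries of the cross-ratio (inversion, six-fold action), and the freedom to vary $c$ and $\gamma$ allow these correction terms to be rewritten as $K_4$-images of simpler elements; a finite bookkeeping argument then yields surjectivity.

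The main obstacle, and the substance of the theorem, is $\ker K_4 \subseteq \mathrm{Im}\, J_4$. My strategy is to fix a generating family of rational functions, compute the subspace $V \subset \mathbb{S}_*^{3,1} A$ spanned by their $J_4$-images, and reduce a general $\xi \in \ker K_4$ modulo $V$. Natural candidates for this family are the weight-four analogues of the building blocks used in Theorem \ref{second}: Kummer-type $(t-a)^2/((t-b)(t-c))$, Goncharov-type cross-ratio functions $(t-a)(t-b)/((t-c)(t-d))$, and their compositions with linear fractional transformations. After this reduction, the residual obstruction in $\xi$ is controlled by an abstract combination of dilogarithm classes, and the hypothesis $K_4(\xi) = 0$ translates it into $\mathcal{R}_2(\mathbb{C})$; each five-term relation in $\mathcal{R}_2(\mathbb{C})$ comes from a concrete five-point configuration on $\mathbb{P}^1$ and hence is witnessed by a specific rational function. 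The technical heart lies in handling the $Sym^4 A$ ambiguity in $\mathbb{S}_*^{3,1} A$ when comparing $J_4$-images and in exhibiting, for each abstract five-term relation, a concrete rational function whose $J_4$-image realizes it.
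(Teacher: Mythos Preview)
Your overall architecture matches the paper's: exactness at the first two terms via Theorem~\ref{first}, then three separate claims about $K_4$. But the weight you assign to the three parts is inverted, and the hardest step is glossed over.

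For surjectivity of $K_4$ you are working much too hard. The paper uses an element with a \emph{repeated} argument:
\[
K_4\Bigl(\tfrac{1}{2}[a]^2[b]\otimes\Bigl[\tfrac{a-bz}{1-z}\Bigr]\Bigr)=[a]\wedge[b]\otimes\{z\}_2,
\]
because five of the six $S_3$-terms collapse (either the wedge is zero or the cross-ratio is degenerate). Your generic $[\alpha][\beta][\gamma]\otimes[c]$ produces six live terms and a bookkeeping mess for no reason.

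For $K_4\circ J_4=0$ the paper does not organise the cancellation as raw five-term relations; it invokes the Rogers identity in $B_2(\mathbb{C})$,
\[
\{f(t)\}_2-\{f(\infty)\}_2=\sum_{a,b}\psi_a(f)\psi_b(1-f)\{r(b,t,a,\infty)\}_2,
\]
which collapses the entire sum to $3\sum_{x_1,x_2}\psi_{x_1}(f)\psi_{x_2}(f)\,x_1\wedge x_2\otimes(\{f(x_1)\}_2-\{f(x_2)\}_2)$, and this vanishes simply because $\psi_x(f)\neq 0$ forces $f(x)\in\{0,\infty\}$. Your direct five-term bookkeeping may eventually work but is not obviously cleaner.

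The real gap is in $\ker K_4\subseteq\mathrm{Im}\,J_4$. The paper organises this by a filtration of $\mathbb{S}^{3,1}_*A$ according to the number of distinct points among $x_1,x_2,x_3,y$, and then shows the associated graded pieces $\overline{gr_1},\overline{gr_2},\overline{gr_4}$ vanish modulo $\mathrm{Im}\,J_4$, reducing everything to $\overline{gr_3}$. Already the vanishing of $\overline{gr_4}$ (a generic element $[a][b][c]\otimes[d]$ with four distinct points) is not reached by your list of Kummer- and Goncharov-type functions alone: the paper needs quadratic functions built from commuting projective \emph{involutions} of $\mathbb{P}^1$ to produce a combination whose $J_4$-image is exactly $-12[a][b][c]\otimes[d]$. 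Nothing in your proposal anticipates this.

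On $\overline{gr_3}$ the paper then constructs an explicit candidate inverse $L_4\colon (A\wedge A)\otimes B_2(\mathbb{C})\to \overline{gr_3}$ by $[a]\otimes[b]\otimes\{z\}_2\mapsto\tfrac{1}{2}[a]^2[b]\otimes[\tfrac{a-bz}{1-z}]$, and the entire content of the theorem becomes: $L_4$ kills the five-term relation. Your sentence ``each five-term relation \dots\ is witnessed by a specific rational function'' is exactly this, but it is by far the hardest step and the paper devotes two lemmas to it. The witness is not a single rational function but a specific linear combination
\[
\sum q[x_1x_2|x_3\infty]+\tfrac12\sum q[x_1^2|x_2x_3]+\tfrac14\sum q[\infty^2|x_1x_2]-\tfrac12\sum q[x_1^2|x_2\infty]
\]
of quadratic functions indexed by a configuration of fifteen auxiliary points on $\mathbb{P}^1$ determined by the original five via pairwise involutions. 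The combinatorics of how these fifteen points transform under involutions (Lemma~\ref{l44}) is what makes the cancellation in $J_4$ go through. None of this structure is visible from the building blocks you list, and without it the reduction step ``residual obstruction lies in $\mathcal{R}_2(\mathbb{C})$, hence is witnessed'' is an assertion, not an argument.
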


By {\it quadratic rational} function we mean a fraction of two polynomials in variable $t$ of degree $2$ with complex coefficients.  We call  a quadratic rational function $q(t)$  {\it special} if $q(\infty) \in \{0,1,\infty \}$ or $q(t)$ has a double root. The next corollary from the proof of theorem \ref{third} gives general functional equations for classical polylogarithm of weight four.

\begin{corollary}
For arbitrary rational function $f(t) \in \mathbb{C}(t)$ there exists  an integer $k$, rational numbers $a_i$ and special quadratic rational functions $q_i(t)$ such that
$$\mathcal{L}i_4(f(t)) = \sum_{i=1}^{k} a_i \mathcal{L}i_4 (q_i(t))+ const.$$
\end{corollary}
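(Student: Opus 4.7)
By Theorem \ref{first}, establishing an identity $\mathcal{L}i_4(f(t)) = \sum a_i \mathcal{L}i_4(q_i(t)) + \mathrm{const}$ is equivalent to the equality $J_4([f]) = \sum a_i J_4([q_i])$ in $\mathbb{S}_*^{3,1} A$. Theorem \ref{third} identifies the image of $J_4$ with $\ker K_4$, so the corollary reduces to showing that the $\mathbb{Q}$-linear span $V \subseteq \mathbb{S}_*^{3,1} A$ of $\{J_4([q]) : q \text{ special quadratic rational}\}$ coincides with $\ker K_4$. The inclusion $V \subseteq \ker K_4$ is automatic from exactness, so the substantive task is to prove $\ker K_4 \subseteq V$.

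The plan is to extract this inclusion directly from the proof of Theorem \ref{third}. That proof must establish surjectivity of $J_4$ onto $\ker K_4$ by means of an explicit lifting recipe: for each $\xi \in \ker K_4$, it produces a rational function $f_\xi$ with $J_4([f_\xi]) = \xi$. I would inspect this recipe and verify that $f_\xi$ can always be chosen as a $\mathbb{Q}$-linear combination of special quadratic rationals. This is the exact weight-4 analogue of the two identities recalled in the introduction after Theorem \ref{second}: in weight $2$ and $3$, the sufficiency of linear fractional arguments is captured by Roger's identity and by the Wojtkowiak/Kummer/Goncharov identity, respectively; here we are after the corresponding statement in weight $4$ with linear fractionals replaced by special quadratics.

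Concretely, the goal is to produce a uniform Wojtkowiak-type formula
$$\mathcal{L}i_4(f(t)) = \sum_{a_1, a_2, a_3, b \in \mathbb{C}} \psi_{a_1}(f)\psi_{a_2}(f)\psi_{a_3}(f)\psi_b(1-f) \, \mathcal{L}i_4\bigl(Q(t; a_1, a_2, a_3, b)\bigr) + \mathrm{const},$$
where $Q$ is an explicit recipe, symmetric in $a_1, a_2, a_3$, whose output is a special quadratic rational for every parameter choice. Candidates are $Q$ with a double zero (e.g.\ of the form $(t-a_i)^2$ in the numerator after a suitable normalization) or with $Q(\infty) \in \{0, 1, \infty\}$ arranged by a coefficient balance. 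By Theorem \ref{first}, checking such a formula reduces to a finite computation in $\mathbb{S}_*^{3,1} A$, matching the divisors of $f$ and $1-f$ against those of $Q$ and $1-Q$.

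The main obstacle is choosing $Q$ correctly: a generic quadratic rational is not special, and forcing $Q(\infty) \in \{0,1,\infty\}$ or a double root while preserving symmetry in $a_1, a_2, a_3$ is delicate. If no single uniform recipe succeeds, I would fall back to a two-step scheme — first reducing a general $f$ to a sum of generic quadratic rationals by a weight-4 analogue of Goncharov's equation, then expressing each generic quadratic as a $\mathbb{Q}$-combination of special quadratics modulo $\mathcal{A}_4$. The latter is a concrete, finite-dimensional identity in $\mathbb{S}_*^{3,1} A$ that can be verified by direct manipulation of the divisors $(q)$ and $(1-q)$ of the quadratic functions involved.
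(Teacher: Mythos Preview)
Your reduction in the first paragraph is exactly right, and your plan in the second paragraph---to read the corollary off from the explicit lifts constructed in the proof of Theorem~\ref{third}---is precisely the paper's approach. The paper gives no separate argument: the corollary is announced as following ``from the proof of theorem~\ref{third}'', and what one must do is walk through Lemmas~\ref{l42}, \ref{l43}, \ref{l45}, \ref{l46} and observe that every rational function fed into $J_4$ there is special quadratic. The paper in fact introduces three dedicated families for this purpose---$q[x_1x_2|x_3x_4]$, $q[x_1x_2||x_3x_4]$, $q[x_1^2|x_2x_3]$---each built so that either $q(\infty)\in\{0,1,\infty\}$ (a pole or zero sits at $\infty$) or $q$ has a double zero. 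The remaining functions (e.g.\ $\frac{(t-a)(t-b)}{(d-a)(d-b)}$, $\frac{(t-b)^2 z(1-z)}{(t-a)(a-b)}$, and the linear fractions handling $gr_2$) are likewise special. Once you verify this list, the corollary is immediate.

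Your third and fourth paragraphs, by contrast, chase a dead end. A uniform Wojtkowiak-type identity
\[
J_4([f]) \;=\; \sum_{a_1,a_2,a_3,b} \psi_{a_1}(f)\psi_{a_2}(f)\psi_{a_3}(f)\psi_b(1-f)\, J_4\bigl([Q(t;a_1,a_2,a_3,b)]\bigr)
\]
would require each term $J_4([Q])$ to match the monomial $[a_1][a_2][a_3]\otimes[b]$ up to something that cancels in the sum. But in weight~$4$, unlike weights~$2$ and~$3$, an individual monomial $[a_1][a_2][a_3]\otimes[b]$ with four distinct points is \emph{not} in $\operatorname{Im} J_4$: applying $K_4$ to it gives a nonzero element of $(A\wedge A)\otimes B_2(\mathbb{C})$. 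So no single $Q$ can hit it, and the cancellation you would need is exactly the content of the five-term computation in Lemmas~\ref{l45}--\ref{l46}. In other words, the obstruction $K_4$ is precisely what prevents the weight-$2,3$ template from going through; the paper's filtration-by-number-of-distinct-points argument is not an alternative to a clean formula but the actual substance of the proof. Your fallback ``two-step scheme'' is closer in spirit, but it too would need the explicit special-quadratic lifts that the paper already supplies.
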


\section{Examples and motivation.}

We start this section with the proof of theorem \ref{second} and its corollaries. These results are very simple and serve as an illustration of our method of dealing with functional equations.  Next, we explain how general conjectures of Goncharov about mixed Tate motives lead to theorems \ref{first} and \ref{third}.

\begin{proof}
First, we need to prove that $J_1$ is surjective. This is true, since for arbitrary $a \in \mathbb{C}$, obviously, 
$J_1(t-x)=[x] \in A.$ Since elements of this type form a basis of $A$, $J_1$ is surjective. We remember that in our inhomogenious notation $[\infty]=0$

Similarly, $J_2$ is surjective, since $J_2(\dfrac{t-a}{b-a}) = [a] \wedge [b] \in A \wedge A = \mathbb{S}^2_*(A).$

The last claim is surjectivity of $J_3.$  Consider the following filtration on $\mathbb{S}^{2,1}_*V:$
$$\left [ \mathbb{S}^{2,1}_*V\right ]_1 \subseteq \left [ \mathbb{S}^{2,1}_*V\right ]_2 \subseteq \left [ \mathbb{S}^{2,1}_*V\right ]_3,$$
where $\left [ \mathbb{S}^{2,1}_*V\right ]_i$ is generated by tensors $[x_1][ x_2] \otimes [x_3] \in \mathbb{S}^{2,1}_*V$ such that between $x_j$ there are at most $i$ different. Let $gr_i \mathbb{S}^{2,1}_*V$ be associated graded factors $\left [ \mathbb{S}^{2,1}_*V\right ]_i / \left [ \mathbb{S}^{2,1}_*V\right ]_{i-1}.$ 

First note that $\left [ \mathbb{S}^{2,1}_*V\right ]_1$ vanishes, since $[a]^2 \otimes [a] \in Sym^3(V).$ Next, $J_3$ is surjective on $gr_2 \mathbb{S}^{2,1}_*V,$ since $J_3 \left ( \dfrac{t-a}{b-a}\right)=[a]^2 \otimes [b]$ and $[a][b] \otimes [a] =-\frac{1}{2}[a]^2 \otimes [b].$ Finally, we need to show that $J_3$ is surjective on $gr_3 \mathbb{S}^{2,1}_*V.$
This is true, since  $J_3 \left ( \dfrac{(t-a)(c-b)}{(t-b)(c-a)} \right )= ([a]-[b])^2 \otimes ([c]-[b]) = -2 [a][b] \otimes [c].$
\end{proof}

Next, we are going to prove Rogers identity and  that of Wojtkowiak.
To show that $$\mathcal{L}i_2(f(t))= \sum_{a,b \in \mathbb{C}} \psi_a(f) \psi_b(1-f) \mathcal{L}i_2 \left ( \dfrac{t-a}{b-a} \right )+const,$$
we need to prove that 
$$[f(t)]- \sum_{a,b \in \mathbb{C}} \psi_a(f) \psi_b(1-f) \left [ \dfrac{t-a}{b-a} \right ]$$
lies in $\mathcal{A}_2$. From theorem \ref{first} it is enough to show that it's image under $J_2$ vanishes. 
But this is obvious:
$$J_2(f(t))= (f) \wedge (1-f) =\sum_{a,b \in \mathbb{C}} \psi_a(f) \psi_b(1-f)  a \wedge b =  \sum_{a,b \in \mathbb{C}} \psi_a(f) \psi_b(1-f) J_2\left ( \dfrac{t-a}{b-a} \right ).$$

The trilogarithmic identity of Wojtkowiak claims that for $f(t)\in \mathbb{C}(t)$  such that both functions $f(t)$ and $1-f(t)$ are invertible at $t=\infty,$
$$\mathcal{L}i_3(f(t)) =-\frac{1}{2} \sum_{a,b,c \in \mathbb{C}} \psi_a(f) \psi_b(f)  \psi_c(1-f) \mathcal{L}i_3 \left ( \dfrac{(t-a)(c-b)}{(t-b)(c-a)} \right) + const.$$ 
This is equivalent to the fact that
$$[f(t)] + \frac{1}{2}\sum_{a,b,c \in \mathbb{C}}  \psi_a(f) \psi_b(f)  \psi_c(1-f) \left [ \dfrac{(t-a)(c-
b)}{(t-b)(c-a)} \right] $$
lies in $\mathcal{A}_3.$ By theorem \ref{first} it is enough to check that $J_3$ vanishes on this element.
First note that since $f$ and $1-f$  are invertible in $\infty,$ 
$\sum_{a \in \mathbb{C}} \psi_a(f)=0, \sum_{b \in \mathbb{C}} \psi_b(f)=0$ and  $\sum_{c \in \mathbb{C}} \psi_c(1-f)=0.$ From this we can prove the claim:
$$J_3(f)=\sum_{a,b,c \in \mathbb{C}}  \psi_a(f) \psi_b(f)  \psi_c(1-f) [a][b]\otimes[c]=$$
$$=-\frac{1}{2}\sum_{a,b,c \in \mathbb{C}}  \psi_a(f) \psi_b(f)  \psi_c(1-f) ([a]-[b])^2 \otimes ([c]-[b])=$$
$$=-\frac{1}{2}\sum_{a,b,c \in \mathbb{C}}  \psi_a(f) \psi_b(f)  \psi_c(1-f) J_3 \left (\dfrac{(t-a)(c-
b)}{(t-b)(c-a)} \right ).$$

   Our next goal is to show, how theorems \ref{first} and \ref{third} follow from general conjectures of Goncharov, explained in \cite{G}.
These conjectures explain the structure of the category $\mathcal{M}_T(F)$ of mixed Tate motives over field $F$. Note that even the existence of such a category with desirable properties is proved only for some types of fields.  Category   $\mathcal{M}_T(F)$  should be Tannakian, generated by tensor powers of simple object $\mathbb{Q}(1).$ By general properties of Tannakian categories there should exist a graded Lie coalgebra $\mathcal{L}_{MT}(F)$ such that  $\mathcal{M}_T(F)$ is equivalent to the category of representations of $\mathcal{L}_{MT}(F)$.  For simplicity we will denote it just by $\mathcal{L}(F).$ Beilinson and Deligne showed in \cite{BD} that for arbitrary $a \in F$ there should exist an element in $\mathcal{L}(F)$, denoted by
 $Li_n^\mathcal{M}(a),$ called motivic polylogarithm.  Their main property is that for $F= \mathbb{C}$ their Hodge realization gives classical polylogarithms, so all linear relations between motivic polylogarithms give rise to relations for numbers $Li_n(a).$
 
The Lie coalgebra  $\mathcal{L}(F)$ is positively graded by integers $i>0,$ and one can show that $\mathcal{L}_1(F)$ should be isomorphic to $F^*$, generated by motivic logarithms $log^M(a).$ Let $\mathcal{I}(F)$ be a coideal of elements in $\mathcal{L}(F)$ of degree  greater than one. It gives rise to an exact sequence of Lie coalgebras
$$0 \longrightarrow  F^* \longrightarrow \mathcal{L}(F) \longrightarrow \mathcal{I}(F) \longrightarrow 0.$$
The Freeness Conjecture of Alexander Goncharov claims that $\mathcal{I}(F)$ is cofree Lie coalgebra. Furthermore, its first homology group in degree $d$ is generated by elements $Li_d^\mathcal{M}(a),$ for $a \in F.$ Following Goncharov, we will denote this group by $B_d(F).$

It is reasonable to assume that  $\mathcal{L}(\mathbb{C})$ and $\mathcal{L}(\mathbb{C}(t))$ are related by the following exact sequence:
$$0 \longrightarrow  \mathcal{L}(\mathbb{C}) \longrightarrow\mathcal{L}(\mathbb{C}(t)) \longrightarrow \mathcal{L}_A \longrightarrow 0 ,$$
where $\mathcal{L}_A$ is a cofree Lie coalgebra, generated by $A.$ It is an object, graded dual to the graded free Lie algebra, generated by $A^*.$ The exact sequence above is an instance of Van Kampen theorem: fundamental group of a Riemann sphere with all points removed is freely generated by infinitesimal loops around all points. This exact sequence gives rise to the corresponding exact sequence of coideals
$$0 \longrightarrow  \mathcal{I}(\mathbb{C}) \longrightarrow\mathcal{I}(\mathbb{C}(t)) \longrightarrow \mathcal{I}_A \longrightarrow 0.$$
Since all the three coideals are cofree Lie coalgebras, according to Goncharov's Conjecture, only their zero and first homology are nontrivial. Application  of Hochshild-Serre spectral sequence leads to the following exact sequence:
$$0 \longrightarrow  H_1(\mathcal{I}(\mathbb{C})) \longrightarrow H_1(\mathcal{I}(\mathbb{C}(t))) \longrightarrow H_1(\mathcal{I}_A) \longrightarrow H_1(\mathcal{I}(\mathbb{C}),H_1(\mathcal{I}_A)) \longrightarrow  0.$$
Another part of Freeness Conjecture claims that in degree $d$ $H_1(\mathcal{I}(\mathbb{C}))\cong B_d(\mathbb{C})$ and $H_1(\mathcal{I}(\mathbb{C}(t)))\cong B_d(\mathbb{C}(t)).$ Since $B_d(\mathbb{C}(t))$ is a group, generated by motivic polylogarithms of rational functions, it is natural to assume that 
$$B_d(\mathbb{C}(t)) / B_d(\mathbb{C}) \cong \mathbb{Q}[\mathbb{C}(t)]_d/\mathcal{A}_d.$$
A simple application of Koszul resolution leads to the computation of $H_1(\mathcal{I}_A) $ which is isomorphic to $\mathbb{S}^{d-1,1}_*A.$  So, we come to an exact sequence 
$$0 \longrightarrow  \mathcal{A}_d \longrightarrow \mathbb{Q}[\mathbb{C}(t)]_d \longrightarrow \mathbb{S}^{d-1,1}_*A \longrightarrow H_1(\mathcal{I}(\mathbb{C}),H_1(\mathcal{I}_A)) \longrightarrow  0.$$
The first three terms of this sequence lead to the theorem \ref{first}. 

Due to degree reasons, $H_1(\mathcal{I}(\mathbb{C}),H_1(\mathcal{I}_A)) $ vanishes in degree less than $4$. This leads to theorem \ref{second}. 

In degree four $H_1(\mathcal{I}(\mathbb{C}),H_1(\mathcal{I}_A)) \cong A \wedge A \otimes B_2(\mathbb{C}).$
This leads to theorem \ref{third}.

\section{Proof of theorem \ref{first}.}
We need to show that the following sequence is exact:
$$
0 \longrightarrow \mathcal{A}_n \stackrel{I_n}{\longrightarrow}  \mathbb{Q}[\mathbb{C}(t)]_d \stackrel{J_n}{\longrightarrow}  \mathbb{S}_*^{n-1,1} A.
$$
We will prove it by induction on $n.$  
For arbitrary $p \in \mathbb{C}$ we introduce formal analogs of partial derivatives $D_p.$ Define 
$D_p \colon  Sym^n A \longrightarrow Sym^{n-1} A$
by formula 
$$D_p([x_1][x_2]...[x_{n}])= \sum_{i=1}^{n} \psi_p(x_i) \cdot [x_1][x_2]...\hat{[x_i]}...[x_{n}].$$
For us will be important that $D_pX^n=n\psi_p(X) \cdot X^{n-1}.$

\begin{lemma} \label{l1}
The map 
$\oplus D_p\colon Sym^n A \longrightarrow \bigoplus_{p \in \mathbb{C} }Sym^{n-1} A$
is injective.
\end{lemma}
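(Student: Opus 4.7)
The plan is to reduce the lemma to the elementary fact that, in characteristic zero, a polynomial whose partial derivatives in all variables vanish must be a constant. Since $A$ is the free $\mathbb{Q}$-vector space on $\{[x] : x \in \mathbb{C}\}$, I would first identify $Sym^n A$ with the degree-$n$ homogeneous component of the polynomial algebra in the (algebraically independent) indeterminates $\{[x]\}_{x \in \mathbb{C}}$. Because $\psi_p$ is dual to $[p]$, so that $\psi_p([x]) = \delta_{p,x}$, the defining formula
$$D_p([x_1]\cdots[x_n]) = \sum_{i=1}^n \psi_p(x_i) \cdot [x_1]\cdots \hat{[x_i]}\cdots[x_n]$$
is literally the Leibniz rule for $\partial/\partial [p]$ applied to a symmetric monomial. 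Under this identification, $D_p$ therefore coincides with the formal partial derivative $\partial/\partial [p]$ on $Sym^n A$.

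With the translation in hand, the conclusion is immediate. Let $F \in Sym^n A$ lie in the kernel of $\oplus D_p$. Since $F$ involves only finitely many of the basis vectors $[x]$, it can be regarded as a polynomial in finitely many variables over $\mathbb{Q}$; vanishing of all its partial derivatives then forces $F$ to be a constant. This step is where characteristic zero is used, since $\partial [p]^n / \partial [p] = n [p]^{n-1}$ is nonzero only because $n$ is invertible. But $F$ is homogeneous of positive degree $n$, so the only constant it can equal is $0$.

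There is essentially no obstacle here: once the polynomial-algebra dictionary is set up, the argument is two lines. The only small thing to check is that the infinite indexing set $\mathbb{C}$ causes no trouble, which is immediate because every element of $Sym^n A$ is supported on finitely many basis vectors, reducing the claim to the classical statement in finitely many variables. In the sequel, this lemma will presumably serve as the basic tool for extracting information from $\mathbb{S}^{n-1,1}_* A$ one point at a time, which fits naturally with the inductive strategy announced for Theorem~\ref{first}.
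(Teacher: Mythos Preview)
Your proof is correct and follows essentially the same idea as the paper's: both recognize $D_p$ as the formal partial derivative $\partial/\partial[p]$ on the polynomial algebra $Sym^* A$ and use characteristic zero to conclude injectivity. The paper phrases the punchline dually---the compositions $D_{p_1}\circ\cdots\circ D_{p_n}$ span $(Sym^n A)^*$, so vanishing of every $D_pY$ forces $Y=0$---but this is the same content as your ``all partials zero $\Rightarrow$ constant'' step.
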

\begin{proof}
For arbitrary points $p_i \in \mathbb{C}$ the composition $D_{p_1}\circ D_{p_2} \circ ... \circ D_{p_{n}}$ is a functional on $Sym^n A.$ It is easy to see that such elements generate the dual vector space to $Sym^n A.$ So, if for some element $Y \in Sym^n A$ $D_p Y=0$ for arbitrary $p$, then all linear functionals on  $Sym^n A$ vanish on $Y$ and we conclude that $Y=0.$
\end{proof}

Next, we let the operators $D_p$ act on all members of Koszul resolution $Sym^{n-k} A  \otimes \bigwedge^k A$ as $D_p \otimes id.$ It can be shown that $D_p$ form a chain map from Koszul resolution of weight $n$ to Koszul resolution of weight $n-1.$ So, the map $D_p \colon \mathbb{S}_*^{n-1,1}A  \longrightarrow \mathbb{S}_*^{n-2,1}A$ is correctly defined.

\begin{corollary} \label{c1}
The map 
$\oplus D_p: \mathbb{S}_*^{n-1,1}A \longrightarrow \bigoplus_{p \in \mathbb{C} } \mathbb{S}_*^{n-2,1}A$ is injective.
\end{corollary}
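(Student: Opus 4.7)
The plan is to deduce the corollary from Lemma \ref{l1} via the short exact sequence
$$0 \longrightarrow Sym^n A \stackrel{\iota}{\longrightarrow} Sym^{n-1}A \otimes A \stackrel{\pi}{\longrightarrow} \mathbb{S}_*^{n-1,1}A \longrightarrow 0,$$
in which $\iota$ is the symmetrization section $[x_1]\cdots[x_n] \mapsto \sum_i [x_1]\cdots\widehat{[x_i]}\cdots[x_n] \otimes [x_i]$ (injective because its composition with multiplication is $n \cdot \mathrm{id}$). The key tool beyond Lemma \ref{l1} is a formal Poincar\'e lemma based on the Euler identity $\sum_{p \in \mathbb{C}} [p] \cdot D_p = m \cdot \mathrm{id}$ on $Sym^m A$, which follows immediately from the definition of $D_p$ by writing $[x_i] = \sum_p \psi_p(x_i) [p]$.

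First I would check, by the explicit formula for $D_p$, that $(D_p \otimes \mathrm{id}) \circ \iota = \iota \circ D_p$; this confirms that $D_p \otimes \mathrm{id}$ descends to the $D_p$ on $\mathbb{S}_*^{n-1,1}A$ supplied by the chain-map property. Given $X \in \mathbb{S}_*^{n-1,1}A$ with $D_p X = 0$ for every $p$, pick a lift $\tilde{X} \in Sym^{n-1}A \otimes A$; since $\pi((D_p \otimes \mathrm{id})\tilde{X}) = D_p X = 0$, there is a unique $Z_p \in Sym^{n-1}A$ with $(D_p \otimes \mathrm{id})\tilde{X} = \iota(Z_p)$.

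The commutativity $D_p D_q = D_q D_p$ on $Sym^n A$, combined with $\iota \circ D_q = (D_q \otimes \mathrm{id}) \circ \iota$ and the injectivity of $\iota$, forces the closedness condition $D_p Z_q = D_q Z_p$ for all $p, q \in \mathbb{C}$. The Poincar\'e step is then to set
$$Y := \frac{1}{n} \sum_{p \in \mathbb{C}} [p] \cdot Z_p \in Sym^n A,$$
a finite sum since only finitely many $Z_p$ are nonzero, and to verify by a direct manipulation (applying $D_q$ as a derivation, splitting off the $p = q$ term, then using $D_p Z_q = D_q Z_p$ together with the Euler identity $\sum_p [p] D_p = (n-1) \cdot \mathrm{id}$ on $Sym^{n-1}A$) that $D_q Y = Z_q$ for every $q$.

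Once $Y$ is produced, $(D_p \otimes \mathrm{id})(\tilde{X} - \iota(Y)) = \iota(Z_p) - \iota(D_p Y) = 0$ for all $p$. Tensoring the injection $\oplus D_p \colon Sym^{n-1}A \hookrightarrow \bigoplus_p Sym^{n-2}A$ of Lemma \ref{l1} with $A$ over $\mathbb{Q}$ (where tensor product is exact) shows $\oplus (D_p \otimes \mathrm{id})$ is injective on $Sym^{n-1}A \otimes A$, whence $\tilde{X} = \iota(Y)$ and therefore $X = \pi(\tilde{X}) = 0$. The main obstacle I anticipate is the Poincar\'e step itself: one must check that the simple explicit formula for $Y$ really does recover each $Z_q$, and this is the only point where the Euler identity is used in an essential way; all surrounding steps are formal diagram chasing.
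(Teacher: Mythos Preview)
Your argument is correct, but it takes a different route from the paper. The paper embeds $\mathbb{S}_*^{n-1,1}A$ \emph{downstream} into $Sym^{n-2}A \otimes \bigwedge^2 A$ via the next Koszul differential $K_{n-1}$ (this is injective because its kernel is, by Koszul exactness, exactly the image of $\iota$). Since $D_p$ is a chain map of Koszul complexes, one gets a commutative square whose left vertical ($K_{n-1}$) and bottom horizontal ($\oplus D_p$ on $Sym^{n-2}A$, tensored with $\mathrm{id}_{\bigwedge^2 A}$) are both injective by Lemma~\ref{l1}; injectivity of the top follows immediately.

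You instead lift \emph{upstream} to $Sym^{n-1}A\otimes A$ and then spend the effort on a Poincar\'e-type lemma to correct the lift into $\iota(Sym^n A)$. Your Euler identity $\sum_p [p]\,D_p = m\cdot\mathrm{id}$ on $Sym^m A$ is valid (it is just the observation that $[x]=\sum_p \psi_p(x)\,[p]$), and the computation $D_q Y = \tfrac{1}{n}Z_q + \tfrac{n-1}{n}Z_q = Z_q$ goes through as you outline. The paper's approach is shorter: it replaces your entire Poincar\'e step by a one-line diagram chase, at the cost of invoking exactness of the Koszul complex one step further. Your approach, on the other hand, uses only the defining short exact sequence for $\mathbb{S}_*^{n-1,1}A$ and is more explicitly constructive; it also avoids the mild awkwardness the paper's square has at $n=2$ (where $Sym^{n-3}A$ is undefined).
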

\begin{proof}
 $X_p$ form a chain map of Koszul resolutions so the following diagram is commutative.
$$
\begin{CD}
\mathbb{S}_*^{n-1,1}A@>\oplus D_p>>\bigoplus_{p \in \mathbb{C}}  \mathbb{S}_*^{n-2,1}A\\
      @VV K_{n-1}V   @VV \oplus K_{n-2} V     \\
Sym^{n-2}A \otimes A \wedge A@>\oplus D_p>> \bigoplus_{p \in \mathbb{C}} Sym^{n-3}A \otimes A \wedge A\\
\end{CD} $$
Since both vertical and the lower horizontal maps are injective, the upper horizontal map is injective as well.
\end{proof}

Now we introduce similarly denoted maps $D_p$ from  $\mathbb{Q}[\mathbb{C}(t)]_n$ to $\mathbb{Q}[\mathbb{C}(t)]_{n-1}.$ For $n\geq3$ we put
$$D_p[\varphi]_n=\psi_p(\varphi) \cdot [\varphi]_{n-1}$$ 
and for $n=2$ we put 
$$D_p[\varphi]_2=\psi_p(\varphi) \cdot [1-\varphi]_1-\psi_p(1-\varphi) \cdot [\varphi]_1.$$ 
It is easy to check that $D_p \circ J_n=J_{n-1} \circ D_p.$ Now theorem \ref{first} is a simple consequence of the following lemma:
\begin{lemma}\label{l2}
Element $F \in \mathbb{Q}[\mathbb{C}(t)]_n$ lies in $\mathcal{A}_n$ if and only $D_p F$ lies in $\mathcal{A}_{n-1}$ for every $p.$
\end{lemma}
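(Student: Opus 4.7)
My approach is to work analytically with the differential of
\[
G(t) \;:=\; \sum_i n_i \mathcal{L}i_n(\varphi_i(t))
\]
and to relate the vanishing of $dG$ to the constancy of the ``residue-type'' functions
\[
H_p(t) \;:=\; \mathrm{val}_{n-1}(D_p F)(t),
\]
whose constancy is exactly the assertion $D_p F\in\mathcal{A}_{n-1}$. The commutation $D_p\circ J_n=J_{n-1}\circ D_p$ already tells us the combinatorics are compatible; the content of the lemma is analytic.

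The starting point is the differential identity for the Zagier polylogarithm. For $n\ge 3$ one has a clean relation of the shape $d\mathcal{L}i_n(z)=\mathcal{L}i_{n-1}(z)\cdot\omega(z)$ with $\omega(z)$ a real one-form proportional to $d\log|z|$; for $n=2$ the Bloch--Wigner differential is antisymmetric in $z$ and $1-z$, which is precisely why $D_p$ is defined asymmetrically at weight $2$. Inserting the partial-fraction expansion $d\log|\varphi(t)|=\sum_p\psi_p(\varphi)\,d\log|t-p|$ (and its analogue for $d\arg$) I obtain in every case
\[
dG(t)\;=\;\sum_{p\in\mathbb{C}}H_p(t)\,\eta_p(t),
\]
where $\eta_p$ is a real one-form with a simple logarithmic-type singularity supported at $t=p$ and regular elsewhere.

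For the backward implication I assume each $D_pF\in\mathcal{A}_{n-1}$, so each $H_p$ is identically some constant $c_p$. Then $dG=d\bigl(\sum_p c_p\Phi_p(t)\bigr)$, where $\Phi_p$ is a primitive of $\eta_p$ (for instance $\Phi_p(t)=\log|t-p|$). Hence $G(t)-\sum_p c_p\Phi_p(t)$ is a constant. Because $\mathcal{L}i_n$ is continuous on the compact sphere $\mathbb{CP}^1$, the function $G$ is bounded, whereas each $\Phi_p$ diverges at $t=p$; the only way to reconcile these facts is $c_p=0$ for every $p$. We conclude $dG=0$, hence $G$ is constant, that is, $F\in\mathcal{A}_n$.

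For the forward implication I assume $F\in\mathcal{A}_n$, so $dG\equiv 0$, and by the analogous computation for $d^cG$ also $\sum_p H_p(t)\,d\arg(t-p)=0$. Combining these yields a single holomorphic-type identity
\[
\sum_{p}\frac{H_p(t)}{t-p}\;=\;0 \qquad \text{for all } t.
\]
Letting $t\to p_0$ and comparing leading singular behaviour immediately gives $H_{p_0}(p_0)=0$ at every basepoint. The key step is to upgrade this pointwise vanishing to $H_p\equiv 0$ as a function of $t$: I use that each $H_p$ is real-analytic on $\mathbb{CP}^1$ off a finite set and continuous everywhere, together with the global vanishing of the above identity on the whole Riemann sphere, to conclude that each individual $H_p$ must vanish identically. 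This gives $D_pF\in\mathcal{A}_{n-1}$ for every $p$.

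The main obstacle is precisely this last upgrade in the forward direction: the relation $\sum_p H_p(t)/(t-p)=0$ is one scalar equation entangling the values of all $H_p$'s at once, and extracting the identical vanishing of each individual $H_p$ requires carefully combining the local singularity analysis near each basepoint with the global boundedness on $\mathbb{CP}^1$ provided by the single-valuedness and continuity of Zagier's polylogarithm.
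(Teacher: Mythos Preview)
Your two halves of the argument fare very differently.

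\textbf{The ``if'' direction.} Your analytic argument here is essentially correct and is a genuine alternative to the paper's proof. The paper deduces $F\in\mathcal{A}_n$ from Zagier's algebraic criterion in $\mathrm{Sym}^{n-2}\mathbb{C}(t)^\ast_{\mathbb Q}\otimes\bigwedge^2\mathbb{C}(t)^\ast_{\mathbb Q}$, and spends the bulk of the proof showing that the element $Z(F)=\sum n_i[\varphi_i]^{n-2}\otimes[\varphi_i]\wedge[1-\varphi_i]$ is constant; this requires splitting functionals on $\mathbb{C}(t)^\ast_{\mathbb Q}$ into ``constantial'' and ``valuational'' ones and proving a nontrivial identity
\[
\alpha(\varphi)\psi_p(1-\varphi)-\alpha(1-\varphi)\psi_p(\varphi)=\sum_{q}\alpha(p-q)\bigl(\psi_p(\varphi)\psi_q(1-\varphi)-\psi_p(1-\varphi)\psi_q(\varphi)\bigr).
\]
Your route bypasses all of this: if each $H_p\equiv c_p$, then $dG=\sum_p c_p\eta_p$, and the continuity of $G$ on all of $\mathbb{CP}^1$ forces every $c_p=0$. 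One correction: the assertion that ``$\Phi_p$ diverges at $t=p$'' is only right when $\eta_p=d\log|t-p|$. For $n=2$ (and depending on parity for higher $n$) the relevant one-form is $d\arg(t-p)$, whose primitive is bounded but multivalued; there the correct argument is that $dG$ is globally exact whereas $\sum_p c_p\,d\arg(t-p)$ has period $2\pi c_{p_0}$ around $p_0$, again forcing $c_{p_0}=0$. With this fix your argument is complete and rather cleaner than the paper's.

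\textbf{The ``only if'' direction.} Here there is a real gap, precisely the one you flag. The identity
\[
\sum_p \frac{H_p(t)}{t-p}=0
\]
is, at each fixed $t$, two real linear equations in the $k$ real unknowns $H_{p_1}(t),\dots,H_{p_k}(t)$; for $k\ge 3$ this is underdetermined, and there are nonzero bounded real-analytic tuples $(H_p)$ satisfying it. Your appeal to ``real-analyticity together with global vanishing'' does not separate the individual $H_p$'s; what is missing is a use of the specific structure of each $H_p$ as a combination of polylogarithms (so that one can iterate the residue extraction, or equivalently analyse higher-order terms in the local expansion of $dG$ at each $p$). The paper does not fill this in either --- it simply cites Goncharov's theorem, whose proof carries out exactly this refined singularity analysis. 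Without either that citation or a concrete replacement, your forward implication is not proved.
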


\begin{proof}
The "only if" part is the direct corollary of theorem 1.17 in \cite{G1}. The proof is based on the analysis of singularities of the function $\dfrac{d\mathcal{L}i_n(F)}{dt}.$

The "if" part is slightly more tricky. For arbitrary abelian group $G$ denote by $G_{\mathbb{Q}}$ a $\mathbb{Q}-$vector space $G \otimes_{\mathbb{Z}} \mathbb{Q}.$ We first recall the well-known criteria of Zagier for polylogarithmic functional equations. 
It claims that if for some $n_i \in \mathbb{Z}$ and $\varphi_i \in \mathbb{C}(t)$ equality 
$$\sum n_i [\varphi_i]^{n-2} \otimes  [\varphi_i] \wedge [1-\varphi_i]=const$$ 
holds in $Sym^{n-2}\mathbb{C}(t)^*_{\mathbb{Q}} \otimes \mathbb{C}(t)^*_{\mathbb{Q}} \wedge \mathbb{C}(t)^*_{\mathbb{Q}},$
 then $\sum n_i \mathcal{L}i_n(\varphi_i(t))=const,$ so $\sum n_i [\varphi_i] \in \mathcal{A}_n.$ Here by $const$ in 
$Sym^{n-2}\mathbb{C}(t)^*_{\mathbb{Q}} \otimes \mathbb{C}(t)^*_{\mathbb{Q}} \wedge \mathbb{C}(t)^*_{\mathbb{Q}}$ we mean an element in $Sym^{n-2}\mathbb{C}^*_{\mathbb{Q}} \otimes \mathbb{C}^*_{\mathbb{Q}} \wedge \mathbb{C}^*_{\mathbb{Q}}$

Let us suppose that for some $X=\sum n_i [\varphi_i]$ for arbitrary $p \in \mathbb{C}$ holds $D_p(X)=0.$ We will deduce the fact that $X \in \mathcal{A}_n$ from Zagier criterion. Denote the element $\sum n_i [\varphi_i]^{n-2} \otimes  [\varphi_i] \wedge [1-\varphi_i]$ by $Z(X).$  

We will consider two sets of functionals $\alpha \colon \mathbb{C}(t)^*_{\mathbb{Q}}\longrightarrow \mathbb{Q},$ which we will call "constantial" and "valuational." To construct constantial functionals, consider some(uncountable) basis  of    $\mathbb{C}^*_{\mathbb{Q}}/\mathbb{Q}$ and take the dual basis. Then each element of this dual basis can be extended to whole $ \mathbb{C}(t)^*_{\mathbb{Q}}$ by letting it vanish on all monomials $(t-\lambda).$ In this way we get constantial functionals. Valuational functionals are just $\psi_p$ for $p \in \mathbb{C}.$  If each constantial and each valuational functional annulates an element of $ \mathbb{C}(t)^*_{\mathbb{Q}}$, then this element vanishes. Similarly, to show that $Z(X)=0$  it is enough to prove that all functionals $\psi_1\cdot ....\cdot \psi_{n-2} \otimes \psi_{n-1} \wedge \psi_n$ vanish on $Z(X),$ where each $\psi_i$ is either constantial  or valuational. We prove this statement by induction on the number $C$ of constantial functionals.  

Applying the "only if" part of the lemma to each $D_p X$ $n-1$ times, we deduce that if all $\psi_i$ are valuational, than any functional $\psi_1\cdot ....\cdot \psi_{n-2} \otimes \psi_{n-1} \wedge \psi_n$ vanishes on $Z(X).$ This proves the base of induction $C=0.$ Now, let us suppose that for $s=k-1$ the statement is proved. The value of the functional $\psi_1\cdot ....\cdot \psi_{n-2} \otimes \psi_{n-1} \wedge \psi_n$  on $Z(X)$ does not change after a permutation of the first $n-2$ functionals and change sign after the transposition of the last two. Moreover, the following sum vanishes on $Z(X):$
$$\psi_1\cdot ....\cdot \psi_{n-3}\cdot \psi_{n-2} \otimes \psi_{n-1} \wedge \psi_n+$$
$$\psi_1\cdot ....\cdot \psi_{n-3}\cdot \psi_{n} \otimes \psi_{n-2} \wedge \psi_{n-1}+$$
$$\psi_1\cdot ....\cdot \psi_{n-3}\cdot \psi_{n-1} \otimes \psi_{n} \wedge \psi_{n-2}.$$
From this it follows that it is enough to show that under the induction hypothesis 
functionals $\psi_1\cdot ....\cdot \psi_{n-2} \otimes \psi_{n-1} \wedge \psi_n$ vanishes on $Z(X)$ with $\psi_{n-1}$ constantial and $\psi_n$--- valuational.

The last claim follows from the following identity: for arbitrary constantial functional $\alpha \colon  \mathbb{C}^*_{\mathbb{Q}} \longrightarrow \mathbb{Q}$, $p \in \mathbb{C}$ and $\varphi(t) \in \mathbb{C}(t)$
$$\alpha(\varphi)\psi_p(1-\varphi)-\alpha(1-\varphi)\psi_p(\varphi)=$$ 
$$\sum_{q \in \mathbb{C}} \alpha(p-q)(\psi_p(\varphi) \psi_q(1-\varphi)-\psi_p(1-\varphi)\psi_q(\varphi)).$$
Indeed, from this identity we deduce that  $\psi_1\cdot ....\cdot \psi_{n-2} \otimes \alpha \wedge \psi_p$ equals to $\sum_{q \in \mathbb{C}} \alpha(p-q)\psi_1\cdot ....\cdot \psi_{n-2} \otimes \psi_p \wedge \psi_q$ on $Z(X)$. The latter number vanishes by the induction hypothesis. 

To complete the proof of the lemma it remains only to prove the identity above. For this let
$$\varphi(t)=A_{\varphi}\prod_{q \in \mathbb{C}}(t-q)^{\psi_q(\varphi)}, $$
$$1-\varphi(t)=A_{1-\varphi}\prod_{q \in \mathbb{C}}(t-q)^{\psi_q(1-\varphi)}.$$
Since $\alpha$ is constantial, it vanishes on all monomials $(t-\lambda),$ so $\alpha(\varphi)= \alpha(A_\varphi)$ and $\alpha(1-\varphi)= \alpha(A_{1-\varphi})$.  
The function 
$\dfrac{\varphi(t)^{\psi_p(1-\varphi)}}{(1-\varphi(t))^{\psi_p(\varphi)}}$ equals to $1$ in $p,$ 
so
$$\prod_{q \in \mathbb{C}}(p-q)^{\psi_p(\varphi) \psi_q(1-\varphi)-\psi_p(1-\varphi)\psi_q(\varphi)}=\dfrac{A_{\varphi}^{\psi_p(1-\varphi)}}{A_{1-\varphi}^{\psi_p(\varphi)}}.$$
The identity follows by taking the value of functional $\alpha$ of both parts.
\end{proof}

Now we derive theorem \ref{first} from lemma \ref{l2}. 

\begin{proof}
We prove it by induction on $n.$
For $n=1$ we need to show that the sequence 
$$
0 \longrightarrow \mathcal{A}_1 \stackrel{I_1}{\longrightarrow}  \mathbb{Q}[\mathbb{C}(t)]_1 \stackrel{J_1}{\longrightarrow}  A
$$
is exact. An element $F \in  \mathcal{A}_1$ is a linear combination $\sum n_i [\varphi_i],$ such that 
$\sum n_i log|\varphi_i(t)|=const.$ We may assume that $n_i$ are integers. From the previous identity it follows that $log \left | \prod \varphi_i^{n_i} \right |=const.$ By Liouville's theorem we deduce that  $\prod \varphi_i^{n_i}=const,$ so $\sum n_i (\varphi_i)=0$ in $A.$ This means that $J_1 \circ I_1=0.$ Similarly one can show that $Ker(J_1)=Im(I_1).$

Next suppose that theorem \ref{first} is proved for $n=k-1.$ Let us show that $J_k \circ I_k=0.$ For arbitrary $F \in  \mathbb{Q}[\mathbb{C}(t)]_k$ lying in the image of $\mathcal{A}_k$ and for any $p \in  \mathbb{C}$ by lemma \ref{l2}  $D_p F$ lies in $\mathcal{A}_{k-1},$
so  by inductive hypothesis $D_p \circ J_k F= J_{k-1} \circ D_p F=0.$ By corollary \ref{c1} $\oplus D_p$ is injective, so $J_k F=0.$ To check that $Ker(J_k)=Im(I_k)$ take $F \in Ker(J_k).$ Then $D_p F \in Ker(J_{k-1}),$ so $D_p F \in \mathcal{A}_{k-1}.$ By lemma \ref{l2} we deduce that $F \in \mathcal{A}_k.$
\end{proof}

\section{Proof of theorem \ref{third}}
We need to show that the  following sequence is exact:
$$0 \longrightarrow \mathcal{A}_4 \stackrel{I_4}{\longrightarrow}  \mathbb{Q}[\mathbb{C}(t)]_4 \stackrel{J_4}{\longrightarrow}  \mathbb{S}_*^{3,1} A  \stackrel{K_4}{\longrightarrow} (A \wedge A ) \otimes B_2(\mathbb{C}) \longrightarrow 0,$$
where 
$K_4(x_1 x_2 x_3 \otimes y)= \sum_{\sigma \in S_3} x_{\sigma(1)} \wedge x_{\sigma(2)} \otimes \{r(x_{\sigma(1)},y,x_{\sigma(3)}, \infty) \}_2.$

We will introduce  a filtration on $\mathbb{S}^{3,1}_*V$ similar to the one, we used in the proof of theorem \ref{second}:
$$\left [ \mathbb{S}^{3,1}_*V\right ]_1 \subseteq \left [ \mathbb{S}^{3,1}_*V\right ]_2 \subseteq \left [ \mathbb{S}^{3,1}_*V\right ]_3\subseteq \left [ \mathbb{S}^{3,1}_*V\right ]_4,$$
where $\left [ \mathbb{S}^{3,1}_*V\right ]_i$ is generated by tensors $[x_1][ x_2][ x_3] \otimes [x_4] \in \mathbb{S}^{3,1}_*V$ such that between $x_j$ there is at most $i$ different. Let $gr_i \mathbb{S}^{3,1}_*V$ be associated graded factors $\left [ \mathbb{S}^{3,1}_*V\right ]_i / \left [ \mathbb{S}^{3,1}_*V\right ]_{i-1}.$ 
For simplicity, we denote $gr_i \mathbb{S}^{3,1}_*A$ just by $gr_i$.  Let $\overline{gr_i}$ denote the factor of $gr_i$ by the image of morphism $J_4.$

By theorem \ref{first}, the sequence 
$$0 \longrightarrow \mathcal{A}_4 \stackrel{I_4}{\longrightarrow}  \mathbb{Q}[\mathbb{C}(t)]_4 \stackrel{J_4}{\longrightarrow}  \mathbb{S}_*^{3,1} A  $$
is exact. 
Next, $K_4$ is surjective, since $K_4\left (\frac{1}{2}[a]^2[b]\otimes \left [ \dfrac{a-bz}{1-z} \right ] \right)=a\wedge b \otimes \{ z \}_2.$
\begin{lemma}\label{l41}
The composition $K_4 \circ J_4$ vanishes.
\end{lemma}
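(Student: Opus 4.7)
The plan is to show $K_4(J_4([\varphi])) = 0$ for each generator $[\varphi] \in \mathbb{Q}[\mathbb{C}(t)]_4$ by a direct computation, where Rogers' identity (proved earlier in the paper) will handle the key reduction.

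First, I will expand $J_4([\varphi]) = (\varphi)^3 \otimes (1-\varphi)$ in the basis and apply $K_4$ by its defining formula. Since the coefficient $\psi_a(\varphi)\psi_b(\varphi)\psi_d(\varphi)$ is symmetric in $(a,b,d)$, renaming dummy indices shows that all six $\sigma \in S_3$ in the definition of $K_4$ contribute equally, collapsing the total to
\[
K_4(J_4([\varphi])) = 6\sum_{a,b,d,c}\psi_a(\varphi)\psi_b(\varphi)\psi_d(\varphi)\psi_c(1-\varphi)\,(a \wedge b) \otimes \left\{\tfrac{a-c}{d-c}\right\}_2.
\]
Absorbing the $b$-sum via $\sum_b \psi_b(\varphi)(a\wedge b) = a \wedge (\varphi)$ rewrites this as $6\sum_a \psi_a(\varphi)\,(a\wedge(\varphi))\otimes G(a)$, where
\[
G(a) := \sum_{c,d}\psi_c(1-\varphi)\psi_d(\varphi)\left\{\tfrac{a-c}{d-c}\right\}_2 \in B_2(\mathbb{C}).
\]

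The decisive step is to identify $G(a)$ via Rogers' identity. Applied to $f = 1-\varphi$, Rogers' identity reads $[(1-\varphi)(t)] \equiv \sum_{c,d}\psi_c(1-\varphi)\psi_d(\varphi)\,[(t-c)/(d-c)] \pmod{\mathcal{A}_2}$. Specializing at $t = a$ then yields $G(a) = \{1-\varphi(a)\}_2$ in $B_2(\mathbb{C})$; this uses that the evaluation map $\mathbb{Q}[\mathbb{C}(t)]_2/\mathcal{A}_2 \to B_2(\mathbb{C})$ is well defined, since five-term relations in $\mathbb{C}(t)$ specialize to five-term relations in $\mathbb{C}$. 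The outer sum only contributes for $a$ with $\psi_a(\varphi)\neq 0$, i.e.\ at zeros or poles of $\varphi$; for such $a$ one has $\varphi(a) \in \{0,\infty\}$, so $1-\varphi(a) \in \{1,\infty\}$, and $\{1-\varphi(a)\}_2 = 0$ in $B_2(\mathbb{C})$. This forces $K_4 \circ J_4([\varphi]) = 0$.

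The main obstacle is justifying the specialization step cleanly at precisely those values of $a$ we care about, namely the zeros and poles of $\varphi$, where some formal terms of the shape $\{0/0\}_2$ may appear in $G(a)$. These are harmless---they are resolved using the standing conventions $\{0\}_2 = \{1\}_2 = \{\infty\}_2 = 0$ together with careful grouping of the pairs $(c,d)$ for which $c = d$ or for which one of $c, d$ coincides with $a$---but verifying that no genuine obstruction hides in these degenerate terms is the one place that demands care beyond routine symbolic manipulation.
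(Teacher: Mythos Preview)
Your argument is essentially the same as the paper's: expand $K_4\bigl((\varphi)^3\otimes(1-\varphi)\bigr)$, use the $S_3$-symmetry of the coefficients $\psi_{x_1}(\varphi)\psi_{x_2}(\varphi)\psi_{x_3}(\varphi)$ to collapse the sum, apply Rogers' identity to evaluate the inner double sum as $\{\,(1-\varphi)(a)\,\}_2$ (the paper applies Rogers to $\varphi$ rather than $1-\varphi$, obtaining $\{\varphi(a)\}_2$; the two are equivalent via $\{z\}_2=-\{1-z\}_2$), and finish by noting that the surviving indices $a$ are zeros or poles of $\varphi$, whence $\{\varphi(a)\}_2=0$.

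Two small remarks. First, Rogers only gives $G(a)=\{1-\varphi(a)\}_2$ up to the constant $\{(1-\varphi)(\infty)\}_2$; you should note that this constant disappears in the outer sum because $\sum_a\psi_a(\varphi)\,a\wedge(\varphi)=(\varphi)\wedge(\varphi)=0$. The paper avoids this by writing the expression in the antisymmetrized form $\{f(x_1)\}_2-\{f(x_2)\}_2$, where the constant cancels visibly. Second, your specialization step (``$\mathbb{Q}[\mathbb{C}(t)]_2/\mathcal{A}_2\to B_2(\mathbb{C})$ is well defined'') is not quite as stated: $\mathcal{A}_2$ consists of \emph{all} dilogarithm functional equations, not only five-term relations, so the claim that evaluation kills $\mathcal{A}_2$ in $B_2(\mathbb{C})$ is stronger than ``five-term specializes to five-term.'' The paper sidesteps this by invoking Rogers directly as an identity in $B_2(\mathbb{C})$ valid for each complex $t$; that form is what you actually need, and it is what the paper quotes.
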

\begin{proof}
To show that $K_4 \circ J_4=0$ let us recall the Rogers identity: for arbitrary $f(t) \in \mathbb{C}(t)$ the following equality holds in $B_2(\mathbb{C}):$
$$\{f(t)\}_2-\{f(\infty)\}_2=\sum_{a,b\in \mathbb{C}}\psi_a(f)\psi_b(1-f)\{r(b,t,a,\infty)\}_2.$$
From this equality it follows that
$$K_4 \circ J_4([f])=K_4([f]^3\otimes[1-f])=$$
$$\sum_{x_1,x_2,x_3,y\in\mathbb{C}}\psi_{x_1}(f)\psi_{x_2}(f)\psi_{x_3}(f)\psi_y(1-f)K_4(x_1x_2x_3\otimes y)=$$
$$\sum_{\sigma \in S_3}\sum_{x_1,x_2,x_3,y\in\mathbb{C}}\psi_{x_1}(f)\psi_{x_2}(f)\psi_{x_3}(f)\psi_y(1-f) x_{\sigma(1)} \wedge x_{\sigma(2)} \otimes \{r(x_{\sigma(1)},y,x_{\sigma(3)}, \infty) \}_2=$$
$$3\cdot \sum_{x_1,x_2,x_3,y\in\mathbb{C}}\psi_{x_1}(f)\psi_{x_2}(f)\psi_{x_3}(f)\psi_y(1-f) x_{1} \wedge x_{2} \otimes (\{r(x_1,y,x_3, \infty) \}_2-\{r(x_2,y,x_3, \infty) \}_2)=$$
$$3\cdot \sum_{x_1,x_2\in\mathbb{C}}\psi_{x_1}(f)\psi_{x_2}(f)  x_{1} \wedge x_{2} \otimes (\{f(x_1) \}_2-\{f(x_2) \}_2).$$
The last sum vanishes, since if $\psi_{x_1} \neq 0,$ then  $\{x_1\}_2 = 0$ and if $\psi_{x_2} \neq 0,$ then  $\{x_2\}_2 = 0.$
\end{proof}

It remains only to show that $Im J_4=Ker K_4$. 

\begin{lemma}\label{l42}
 $\overline{gr_1}=0,$  $\overline{gr_2}=0,$  $\overline{gr_4}=0.$
\end{lemma}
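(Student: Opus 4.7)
The lemma addresses three of the four filtration layers, and my plan is to handle them in turn by exhibiting explicit rational functions whose $J_4$-images generate the layer in question. The vanishing $\overline{gr_1}=0$ is immediate: any generator $[a]^3\otimes[a]$ of $[\mathbb{S}^{3,1}_*A]_1$ coincides with the image of $[a]^4\in Sym^4A$ under the defining inclusion and so already vanishes in $\mathbb{S}^{3,1}_*A$, hence $gr_1=0$ itself.

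For $\overline{gr_2}=0$ I restrict to tensors supported on a fixed pair $\{a,b\}$. The five relations in $Sym^4A$ coming from the monomials $[a]^{4-i}[b]^i$ cut $Sym^3A\otimes A$ restricted to this pair (dimension eight) down to three dimensions, with basis $\bigl\{[a]^3\otimes[b],\ [b]^3\otimes[a],\ [a]^2[b]\otimes[b]\bigr\}$. The first two are realized directly as $J_4\bigl(\tfrac{t-a}{b-a}\bigr)$ and $J_4\bigl(\tfrac{t-b}{a-b}\bigr)$. For the third I would use $f(t)=\tfrac{t-a}{t-b}$, for which both $(f)=[a]-[b]$ and $(1-f)=-[b]$ are supported in $\{a,b\}$; expanding $J_4(f)=([a]-[b])^3\otimes(-[b])$ and reducing modulo the $Sym^4$-relations writes $[a]^2[b]\otimes[b]$ as an explicit linear combination of these three $J_4$-outputs.

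For $\overline{gr_4}=0$ I plan to use Goncharov's function $g(t)=\tfrac{(t-a)(t-b)}{(t-c)(t-d)}$ attached to four distinct parameters, with divisor $(g)=[a]+[b]-[c]-[d]$ and $(1-g)=[e]-[c]-[d]$ for the auxiliary zero $e=\tfrac{ab-cd}{a+b-c-d}$. Cubing $(g)$ and tensoring with $(1-g)$, the projection of $J_4(g)$ to $gr_4$ becomes a concrete combination of four-distinct-point tensors among $\{a,b,c,d,e\}$. The plan is to combine outputs for various $S_4$-relabelings of the parameters (and in particular the degeneration $a+b=c+d$, which pushes $e\to\infty$ and leaves a combination supported entirely on $\{a,b,c,d\}$) in order to span the full subspace $gr_4|_{\{a,b,c,d\}}$ modulo the $Sym^4$-relations for four points. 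The main obstacle lies in this last spanning verification, which should reduce to a concrete finite-dimensional linear-algebra check once the coefficients arising from the Goncharov outputs are tabulated; the first two cases I expect to be essentially the basis computation sketched above.
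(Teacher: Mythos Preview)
Your treatments of $\overline{gr_1}$ and $\overline{gr_2}$ are correct and essentially coincide with the paper's: the paper uses the same linear-fractional functions $\tfrac{t-a}{b-a}$ and $\tfrac{t-a}{t-b}$ together with the $Sym^4$-relation $3[a]^2[b]\otimes[a]+[a]^3\otimes[b]=0$ to exhaust the two-point layer.

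For $\overline{gr_4}$ your plan has a genuine gap. For a generic $4$-tuple $\{a,b,c,d\}$ the Goncharov function $g_{ab,cd}=\tfrac{(t-a)(t-b)}{(t-c)(t-d)}$ contributes to $gr_4$ the element
\[
6\bigl([a][b][c]\otimes[d]+[a][b][d]\otimes[c]\bigr)+6\bigl(-[a][b][c]-[a][b][d]+[a][c][d]+[b][c][d]\bigr)\otimes[e_1],
\]
with $e_1=\tfrac{ab-cd}{a+b-c-d}$. Since $J_4(1/g)=-J_4(g)$ in $\mathbb{S}^{3,1}_*A$, the $S_4$-relabelings give only three essentially distinct functions $g_{ab,cd},g_{ac,bd},g_{ad,bc}$, each bringing its own auxiliary point $e_1,e_2,e_3$; for generic parameters these are pairwise distinct and distinct from $a,b,c,d$, so no nontrivial linear combination of the three $J_4(g_i)$ is supported on $\{a,b,c,d\}$ alone. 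Your degeneration $a+b=c+d$ does send $e_1\to\infty$, but the resulting relation then holds only for $4$-tuples on that codimension-one locus; it is a statement about particular basis vectors of $A$, not an algebraic identity one can specialize from, so it does not yield $[a][b][c]\otimes[d]\in\mathrm{Im}\,J_4$ for arbitrary $a,b,c,d$. The ``finite-dimensional linear-algebra check'' you anticipate is therefore not available with these inputs.

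The paper supplies the missing idea: it uses quadratic functions $q$ invariant under a prescribed projective involution $I$, i.e.\ the unique quadratic with zeros $\{a_1,b_1\}$, poles $\{a_2,b_2\}$ and value $1$ at $\{a_3,b_3\}$, where $I$ interchanges $a_i\leftrightarrow b_i$. Placing one pole at $\infty$ gives $J_4(q)\equiv -6[a_1][b_1][a_2]\otimes([a_3]+[b_3])$ in $gr_4$. For arbitrary $a,b,c,d$ the paper then builds three commuting involutions $I_a,I_b,I_c$ and adds the three corresponding quadratics; the extra $I_\bullet(d)$-terms cancel in pairs and one is left with exactly $-12[a][b][c]\otimes[d]$. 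The involution symmetry is what lets one control \emph{both} roots of $1-q$, rather than accepting the single uncontrolled auxiliary root that the generic Goncharov function forces on you.
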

\begin{proof}
$\left [ \mathbb{S}^{3,1}_*V\right ]_1=0,$ since all elements $aaa\otimes a \in Sym^4 A,$ so $\overline{gr_1}=0.$
Next, $J_4\left [ \dfrac{t-a}{b-a}\right ]=[a]^3\otimes [b], J_4\left [ \dfrac{t-a}{t-b}\right ]=-[a-b]^3\otimes [b],$ so
$[a]^3\otimes [b]=0$ and $[a]^2[b] \otimes [b]=[a] [b]^2\otimes [b] \in \overline{gr_2}.$ Since in $\mathbb{S}_*^{3,1}A $ holds equality $ 3[a]^2 [b]\otimes[a]+[a]^3\otimes[b]=0,$ we deduce that $\overline{gr_2}=0.$ 

Let $a_1,a_2,a_3,b_1,b_2,b_3$ be six points on $\mathbb{CP}^1$ such that there exists a projective involution $I$ which translates $a_i$ to $b_i$.  Let's denote by
$\left| \begin{array}{ccc}
a_1 & a_2 & a_3 \\
b_1 & b_2 & b_3 \\
 \end{array} \right|$
 the rational function $\dfrac{(t-a_1)(t-b_1)(a_3-a_2)(a_3-b_2)}{(t-a_2)(t-b_2)(b_3-a_1)(b_3-b_1)}.$ This is exactly the unique quadratic function $q$ such that $q \circ I=q$ and $q(a_1)=0,q(a_2)=\infty, q(a_3)=1.$ 
 
It is easy to see that in $\overline{gr_4}$ the following holds:
$J_4\left [ \left| \begin{array}{ccc}
a_1 & a_2 & a_3 \\
b_1 & \infty & b_3 \\
 \end{array} \right| \right]=([a_1]+[b_1]-[a_2])^3\otimes([a_3]+[b_3]-[a_2])=-6[a_1][b_1][a_2]\otimes ([a_3]+[b_3]).$
 For four points $a,b,c,d \in \mathbb{CP}^1$ consider three involutions: $I_a$ which maps $a$ to $\infty$ and $b$ to $c$, $I_b$ which maps $b$ to $\infty$ and $a$ to $c$, and $I_c$ which maps $c$ to $\infty$ and $b$ to $a$. Then, obviously, these involutions are commuting and $I_a \circ I_b \circ I_c =id.$ From this it follows that involution $I_a$ maps $I_b(d)$ to $I_c(d)$. Consider the following element of $  \mathbb{S}^{3,1}_*A:$
 $$X=\left [ \left| \begin{array}{ccc}
a & b & d \\
c & \infty & I_b(d) \\
 \end{array} \right| \right]+
 \left [ \left| \begin{array}{ccc}
a & c & d \\
b & \infty & I_c(d) \\
 \end{array} \right| \right]-
 \left [ \left| \begin{array}{ccc}
b & a & I_b(d) \\
c & \infty & I_c(d) \\
 \end{array} \right| \right].$$ 
 Then $J_4(X)=-6[a][b][c]\otimes([d]+[I_b(d)])-6[a][b][c]\otimes([d]+[I_c(d)])+6[a][b][c]\otimes([I_c(d)]+[I_b(d)])=-12[a][b][c]\otimes [d].$ From this it follows that $\overline{gr_4}=0.$
\end{proof}

In view of lemma \ref{l42} we need only to show that $K_4:\overline{gr_3} \longrightarrow  (A \wedge A ) \otimes B_2(\mathbb{C})$ is an isomorphism. For this we will construct a map $L_4$ in reverse direction. We define it on $A \otimes A \otimes \mathbb{Z}[\mathbb{C}] $ by formula 
$$L_4([a]\otimes [b] \otimes \{z\}_2)=\dfrac{1}{2}[a]^2[b] \otimes \left [  \dfrac{a-bz}{1-z} \right ].$$
Obviously, $K_4 \circ L_4=id,$ so theorem \ref{third} will be proven if we show that $L_4$ is correctly defined on $(A \wedge A ) \otimes B_2(\mathbb{C})$. For this we need to check four identities in $\overline{gr_3}:$ three more simple and one much harder. The last one is the following "five-term" identity:
$$
L_4\left ([a]\wedge[b] \otimes\left(\sum_{i=0}^4 (-1)^i r(x_0,..., \hat{x_i}, ..., x_4)\right )\right)=0.
$$
The following lemma contains three more simple identities:
\begin{lemma}\label{l43}
In $\overline{gr_3}$ theimage of the following three elements under the map $L_4$ vanish:
$$
[a]\otimes [b] \otimes \{z\}_2+[b]\otimes [a] \otimes \{z\}_2,
$$
$$
[a]\otimes [b] \otimes \{z\}_2+[a]\otimes [b] \otimes \left \{\frac{1}{z} \right \}_2,
$$
$$
[a]\otimes [b] \otimes \{z\}_2+[a]\otimes [b] \otimes \left \{1- z \right \}_2.
$$
\end{lemma}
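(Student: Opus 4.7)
The strategy is direct verification: for each of the three identities, apply the definition of $L_4$, simplify the resulting element of $[\mathbb{S}^{3,1}_*A]_3$, and exhibit auxiliary rational functions whose $J_4$-image produces the needed cancellation in $\overline{gr_3}$. Since $\overline{gr_1} = \overline{gr_2} = \overline{gr_4} = 0$ by Lemma \ref{l42}, any contribution from $J_4[f]$ outside the pure $gr_3$ component may be freely discarded; the verification therefore reduces to bookkeeping in the top graded piece.

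First, compute each sum explicitly. For the antisymmetry $(a,b) \leftrightarrow (b,a)$ we obtain $\tfrac{1}{2}\bigl([a]^2[b]\otimes\bigl[\tfrac{a-bz}{1-z}\bigr] + [b]^2[a]\otimes\bigl[\tfrac{b-az}{1-z}\bigr]\bigr)$. For the inversion $z \mapsto 1/z$, using $\tfrac{a-b/z}{1-1/z} = \tfrac{b-az}{1-z}$, the sum becomes $\tfrac{1}{2}[a]^2[b]\otimes\bigl(\bigl[\tfrac{a-bz}{1-z}\bigr] + \bigl[\tfrac{b-az}{1-z}\bigr]\bigr)$. For the negation $z \mapsto 1-z$, using $\tfrac{a-b(1-z)}{1-(1-z)} = \tfrac{a-b+bz}{z}$, we get $\tfrac{1}{2}[a]^2[b]\otimes\bigl(\bigl[\tfrac{a-bz}{1-z}\bigr] + \bigl[\tfrac{a-b+bz}{z}\bigr]\bigr)$.

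For each target, find a rational function (or short linear combination) whose divisor lies on a small point set including $a$ and $b$, and such that the $gr_3$ projection of $J_4[f]$ matches the target. Natural candidates are ``balanced'' functions $f(t) = \tfrac{(t-p)^m(t-q)^n}{(t-r)^{m+n}}$: the multinomial expansion of $(m[p]+n[q]-(m+n)[r])^3$ supplies an $[a]^2[b]$-type symmetric-power factor, while the divisor of $1-f$ is tuned to introduce the required second-slot point. For the inversion case this is particularly clean, since $\tfrac{a-bz}{1-z}$ and $\tfrac{b-az}{1-z}$ sum to $a+b$ and so arise as the two roots of a quadratic with prescribed coefficients, which one can realize as the numerator of $1-f$ for a suitable quadratic $f$ with divisor supported on $\{a,b,\infty\}$. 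The antisymmetry identity follows from the same construction after symmetrizing in $(a,b)$ and using that $[a]^3 \otimes [\cdot]$ lives in $gr_2$ and therefore vanishes in $\overline{gr_3}$.

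The main obstacle is identifying and verifying the correct auxiliary functions, particularly for the negation identity, where the two second-slot values $\tfrac{a-bz}{1-z}$ and $\tfrac{a-b+bz}{z}$ share no obvious involutive symmetry (unlike the inversion case). Each verification reduces to expanding a multinomial and tracking which terms land in $gr_3$ versus the vanishing lower or top graded pieces --- tedious but mechanical. No conceptually new input is required here; the genuinely novel work for Theorem \ref{third} lies in the subsequent five-term identity for $L_4$, which is handled separately.
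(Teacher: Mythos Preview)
Your plan is the paper's approach: compute the three $L_4$-sums explicitly and kill each by exhibiting a rational function whose $J_4$ lands on it in $\overline{gr_3}$. Your formulas for the three sums are correct, and your observation that the two second-slot points in the inversion case satisfy $c_1+c_2=a+b$ (hence arise as the roots of $1-f$ for $f(t)=\dfrac{(t-a)(t-b)}{(d-a)(d-b)}$) is exactly the paper's construction.

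There is, however, a real gap. The quadratic $f(t)=\dfrac{(t-a)(t-b)}{(d-a)(d-b)}$ gives, after dropping the cube terms,
\[
J_4[f]\;=\;3\bigl([a]^2[b]+[a][b]^2\bigr)\otimes\bigl([c_1]+[c_2]\bigr)
\]
in $\overline{gr_3}$, which is the \emph{sum} of the antisymmetry expression for $z$ and the one for $1/z$ (equivalently, the inversion expression for $(a,b)$ plus that for $(b,a)$). It does not give either identity individually. To separate them you need the preliminary relation
\[
[a]^2[b]\otimes[c]\;=\;[a][b]^2\otimes[c]\quad\text{in }\overline{gr_3},
\]
which the paper obtains first from $J_4$ of the cross-ratio $\dfrac{(t-a)(c-b)}{(t-b)(c-a)}$, yielding $3\bigl([a][b]^2-[a]^2[b]\bigr)\otimes[c]$. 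This function is linear fractional with a free constant and does not fit your template $\dfrac{(t-p)^m(t-q)^n}{(t-r)^{m+n}}$; you should single it out. Once this relation is in hand, your quadratic handles the first identity, and the second follows immediately from the observation $L_4([a]\otimes[b]\otimes\{z\}_2)=L_4([b]\otimes[a]\otimes\{1/z\}_2)$ (both equal $\tfrac12[a]^2[b]\otimes[c_1]$ after the relation).

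For the third identity, the function is not guesswork: the paper takes $f(t)=\dfrac{(t-b)^2 z(1-z)}{(t-a)(a-b)}$, so that $(f)=2[b]-[a]$ and the two zeros of $1-f$ are exactly $\dfrac{a-bz}{1-z}$ and $\dfrac{a-b+bz}{z}$. Expanding $(2[b]-[a])^3$ and using $[a]^2[b]=[a][b]^2$ again collapses the cubic factor to $-6[a]^2[b]$, which is precisely what you need. So the ``main obstacle'' you flag dissolves once the cross-ratio relation is available.
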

\begin{proof}
The space $\overline{gr_3}$ is generated by elements $[a]^2 [b] \otimes [c],$ because $2[a][b][c]\otimes [a]=-[a]^2 [b] \otimes [c]-[a]^2 [c] \otimes [b].$ Since $J_4\left [ \dfrac{(t-a)(c-b)}{(t-b)(c-a)} \right ]=3[a][b]^2\otimes [c]- 3 [a]^2[b]\otimes[c],$ in $\overline{gr_3} $ we have $[a][b]^2\otimes [c]= [a]^2[b]\otimes[c].$ It is easy to see that $J_4\left [ \dfrac{(t-a)(t-b)}{(d-a)(d-b)} \right ]=([a]+[b])^3\otimes([d]+[a+b-d])=6[a]^2[b] \otimes ([d]+[a+b-d]).$ From this the equality $L_4\left((a \otimes b + b\otimes a) \otimes \{z\}_2\right)=0$ follows. The second identity follows from the fact that $L_4(a \wedge b \otimes \{z\})=L_4(b \wedge a \otimes \{\frac{1}{z}\}).$
Finally,  $J_4\left [ \dfrac{(t-b)^2z(1-z)}{(t-a)(a-b)} \right ]=\left ( 2[b]-[a]\right )^3 \otimes \left ( \left [ \dfrac{a-bz}{1-z} \right ]+ \left [ \dfrac{a-b+bz}{z} \right ] - [a] \right )=$
$=-6[b][a]^2 \otimes \left ( \left [ \dfrac{a-bz}{1-z} \right ]+ \left [ \dfrac{a-b+bz}{z} \right ]\right ).$ This is equivalent to the third identity.
\end{proof}
 
It remains to show that the "five term" identity holds. For this we need to do some preparation. Consider arbitrary distinct five points $x_1,...,x_5$ on $\mathbb{P}_{\mathbb{C}}^1$. They determine fifteen other points on $\mathbb{P}_{\mathbb{C}}^1$ in the following way: for each choice of two distinct pairs out of these points  one can construct an image of the fifth point under the involution, interchanging points in pairs. For instance, if we pair $x_1$ with $x_2$ and $x_3$ with $x_4$ we construct a point $y$ such that there exist an involution sending $x_1$ to $x_2$, $x_3$ to $x_4$ and $x_5$ to $y$. This point will be denoted by symbol $[x_1 x_2| x_3x_4]$. Note that this symbol makes sense only when all the five initial points are given.  It is not changed after interchanging of the first pair of elements, second pair of elements and two pairs with each other.   The fifteen  points constructed all together form an interesting configuration on $\mathbb{P}_{\mathbb{C}}^1.$ The following lemma summarizes its main properties.
\begin{lemma} \label{l44}
 An involution, interchanging $x_1$ with $x_2$ and $x_3$ with $x_4$ also interchanges 
$[x_1 x_2| x_3x_4]$
 with
 $x_5.$
 The same involution interchanges 
$[x_1 x_3| x_2 x_4]$
 with 
$[x_1 x_4| x_2 x_3].$
 
 An involution, which fixes $x_1$ and interchanges $x_2$ with $x_3$, also interchanges   $[x_1 x_4| x_2 x_3]$
 with   
$[x_1 x_5| x_2 x_3].$
  \end{lemma}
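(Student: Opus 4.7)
My plan is to verify the three assertions in order, exploiting throughout the fact that any M\"obius involution of $\mathbb{P}^1_{\mathbb{C}}$ swapping two disjoint pairs of distinct points is unique (since a M\"obius transformation is determined by its values at three points). Write $\sigma_{ab,cd}$ for the unique such involution swapping $a\leftrightarrow b$ and $c\leftrightarrow d$, so that $[x_i x_j|x_k x_l]=\sigma_{ij,kl}(x_m)$ where $m$ is the remaining index.

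The first assertion is immediate from the definitions: $[x_1 x_2|x_3 x_4]=\sigma_{12,34}(x_5)$, and since $\sigma_{12,34}^2=\mathrm{id}$, applying $\sigma_{12,34}$ again recovers $x_5$.

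For the second assertion, the key step is an auxiliary lemma: the three involutions $\sigma_{12,34}$, $\sigma_{13,24}$, $\sigma_{14,23}$ pairwise commute in $\mathrm{PGL}_2(\mathbb{C})$ and form a copy of the Klein four-group, in which the product of any two is the third. The verification is a routine three-point check: for instance, $\sigma_{12,34}\circ\sigma_{13,24}$ sends $x_1\mapsto x_4$, $x_2\mapsto x_3$, $x_3\mapsto x_2$, which matches $\sigma_{14,23}$ on three points, so uniqueness forces equality. Evaluating at $x_5$ then yields the claim: $\sigma_{12,34}([x_1 x_3|x_2 x_4])=\sigma_{14,23}(x_5)=[x_1 x_4|x_2 x_3]$.

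The third assertion is the most computational step; no analogous group-theoretic shortcut is available because $\iota$ and $\rho=\sigma_{14,23}$ do not commute in general. My plan is to normalize coordinates on $\mathbb{P}^1_{\mathbb{C}}$ by sending $x_1\mapsto\infty$, $x_2\mapsto -1$, $x_3\mapsto 1$; then $\iota(z)=-z$, and a direct check shows that the unique involution swapping $\infty\leftrightarrow a$ and $-1\leftrightarrow 1$ is $z\mapsto (az-1)/(z-a)$. Hence $\rho(z)=(x_4 z-1)/(z-x_4)$ and $\rho'=\sigma_{15,23}$ is given by $\rho'(z)=(x_5 z-1)/(z-x_5)$, and the desired identity $\iota(\rho(x_5))=\rho'(x_4)$ reduces to
\[
-\frac{x_4 x_5 - 1}{x_5 - x_4} \;=\; \frac{x_5 x_4 - 1}{x_4 - x_5},
\]
which is manifest. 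I expect the main obstacle to be picking the right coordinates: once $\iota$ is realized as a symmetric negation fixing $x_1$, the whole computation collapses to a one-line rational identity, but without this setup the algebra would be significantly messier.
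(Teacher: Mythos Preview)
Your proof is correct. For the first two assertions you and the paper take the same route: the Klein four-group generated by $\sigma_{12,34}$, $\sigma_{13,24}$, $\sigma_{14,23}$ (what the paper calls ``the trick with three commuting involutions,'' already invoked in an earlier lemma). For the third assertion the approaches differ. The paper argues coordinate-free via cross-ratio invariance: writing $A=[x_1x_4|x_2x_3]$ and $B=[x_1x_5|x_2x_3]$, applying the two defining involutions to the four entries gives
\[
r(x_1,x_2,x_3,A)=r(x_4,x_3,x_2,x_5)=r(B,x_2,x_3,x_1)=r(x_1,x_3,x_2,B);
\]
since the involution $\iota$ fixing $x_1$ and swapping $x_2\leftrightarrow x_3$ also preserves cross-ratio, one has $r(x_1,x_2,x_3,A)=r(x_1,x_3,x_2,\iota(A))$, and comparing forces $\iota(A)=B$. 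Your normalization $(x_1,x_2,x_3)\mapsto(\infty,-1,1)$ together with the explicit formula $z\mapsto(az-1)/(z-a)$ is more hands-on but equally short; it has the merit of being completely self-contained, while the paper's argument makes clearer why the identity holds without any choice of coordinates.
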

\begin{proof}
The first statement of the lemma was checked in the proof of lemma \ref{l42} via the trick with three commuting involutions. Let us check the second statement. For this denote point $[x_1 x_4| x_2 x_3]$
 by $A$
 and 
$[x_1 x_5| x_2 x_3]$
 by $B.$ 
 Cross ratio is preserved by a projective involution, so $r(x_1,x_2,x_3,A)=r(x_4,x_3,x_2,x_5)=r(B,x_2,x_3,x_1)=r(x_1,x_3,x_2,B).$ From this the claim of the lemma follows.
 \end{proof}
Using this lemma we will construct three types of quadratic rational functions, which we will use to construct an element in $\mathbb{Q}[\mathbb{C}(t)]_4$ whose image under $J_4$ will coincide with an image under the map 
$L_4$ of $$[a]\wedge[b] \otimes\left(\sum_{i=0}^4 (-1)^i r(x_0,..., \hat{x_i}, ..., x_4)\right ).$$
Suppose that distinct five points $x_1,...,x_5 \in \mathbb{P}_\mathbb{C}^1$ are given. Let $q[x_1 x_2| x_3 x_4]$ be a quadratic function with zeros $x_1$ and $x_2$, poles $x_3$ and $x_4$ and equal to $1$ in $x_5$ and $[x_1 x_2| x_3x_4]$. 
Next, $q[x_1 x_2|| x_3 x_4]$ will be a quadratic function with zeros $x_1$ and $ x_2$, poles $x_3$ and $x_4$ and equal to $1$ in $[x_1 x_3| x_2 x_4]$ and
$[x_1 x_4| x_2 x_3].$
Finally, $q[x_1^2| x_2 x_3]$ will be a quadratic function double zero $x_1$, poles $x_2$ and $x_3$ and equal to $1$ in $[x_1 x_4| x_2 x_3]$ and  
$[x_1 x_5| x_2 x_3].$

The next lemma is the central part of the proof of the theorem \ref{third}. To simplify its formulation and proof let us make some conventions. Suppose that five points $x_1, x_2, x_3, x_4$ and $y$ are given. 
Quadratic functions, like $q[x_1 x_3| x_2 y],$ should be understood with respect to these five points. By $\sum$ we will mean summation over $24$ elements of the group $S_4$ of  all permutations of symbols $x_i$ in the summand. For instance, 
$\sum q[x_1 x_2, x_3,y]$ means $\sum_{\sigma \in S_4} q[x_{\sigma(1)} x_{\sigma(2)}, x_{\sigma(3)},y].$

\begin{lemma} \label{l45}
 The element 
 $$
\left ( -\frac{1}{2} \sum [x_1] [x_2]^2 + \frac{1}{3} \sum [x_1] [x_2][x_3]\right )\otimes \left( [x_1 x_2| x_3 x_4]+[x_1 x_3| x_2 x_4]+[x_1 x_4| x_2 x_3]\right)
 $$
 coincides with
  $$
\left ( -\frac{1}{2} \sum [x_1] [x_2]^2 + \frac{1}{3} \sum [x_1] [x_2][x_3]\right )\otimes \left( [x_1] +[x_2]+ [x_3]+[x_4]-[y]\right)
 $$
 modulo the image of $J_4$.

  \end{lemma}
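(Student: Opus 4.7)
The plan is to exhibit an explicit $F\in\mathbb{Q}[\mathbb{C}(t)]_{4}$, assembled from the three families of special quadratic functions $q[x_{i}x_{j}|x_{k}x_{l}]$, $q[x_{i}x_{j}\|x_{k}x_{l}]$, and $q[x_{i}^{2}|x_{j}x_{k}]$, whose image under $J_{4}$ equals the difference of the two tensors in the statement, i.e.\
$$\Omega\otimes\bigl([x_{1}x_{2}|x_{3}x_{4}]+[x_{1}x_{3}|x_{2}x_{4}]+[x_{1}x_{4}|x_{2}x_{3}]+[y]-[x_{1}]-[x_{2}]-[x_{3}]-[x_{4}]\bigr),$$
where $\Omega$ denotes the common first factor. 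The divisors of the building blocks are read off directly from their definitions: for instance $(q[x_{i}x_{j}|x_{k}x_{l}])=[x_{i}]+[x_{j}]-[x_{k}]-[x_{l}]$ and $(1-q[x_{i}x_{j}|x_{k}x_{l}])=[y]+[x_{i}x_{j}|x_{k}x_{l}]-[x_{k}]-[x_{l}]$, since $1-q$ is a degree-$2$ rational function vanishing exactly at the two points where $q=1$; analogous formulas give $J_{4}$ of the other two families.

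My ansatz is $F=\alpha F_{1}+\beta F_{2}+\gamma F_{3}$, where $F_{1}$ is the sum of $q[x_{i}x_{j}|x_{k}x_{l}]$ over the three partitions of $\{x_{1},x_{2},x_{3},x_{4}\}$ into two unordered pairs, $F_{2}$ is the analogous sum for the $\|$-family, and $F_{3}$ is the sum of all twelve $q[x_{i}^{2}|x_{j}x_{k}]$. Expanding $J_{4}(F)$ inside $\mathbb{S}_{*}^{3,1}A$, the identity splits into two matching conditions. In the second slot, the fifteen auxiliary points of Lemma \ref{l44} contributed by $F_{2}$ and $F_{3}$ must cancel pairwise; this is made possible by the second and third parts of Lemma \ref{l44}, which identify matching auxiliary points via explicit projective involutions, leaving only the three wanted $[x_{i}x_{j}|x_{k}x_{l}]$, the four $[x_{i}]$, and $[y]$. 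In the first slot, after expanding the cubes $([x_{i}]+[x_{j}]-[x_{k}]-[x_{l}])^{3}$ and $(2[x_{i}]-[x_{j}]-[x_{k}])^{3}$ modulo $Sym^{4}A$ and applying the identification $[a][b]^{2}\otimes[c]=[a]^{2}[b]\otimes[c]$ in $\overline{gr_{3}}$ from Lemma \ref{l43}, the weighted sum must reproduce $\Omega=-\tfrac{1}{2}\sum[x_{1}][x_{2}]^{2}+\tfrac{1}{3}\sum[x_{1}][x_{2}][x_{3}]$.

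The main obstacle is the combinatorial bookkeeping: the two matching conditions together form an overdetermined linear system in $(\alpha,\beta,\gamma)$, and the substantive content of the lemma is precisely that this system is consistent, a hidden projective symmetry of the five-point configuration. Using the vanishings $\overline{gr_{1}}=\overline{gr_{2}}=\overline{gr_{4}}=0$ from Lemma \ref{l42} to discard contributions in $Sym^{4}A$ and in the non-$gr_{3}$ strata, the verification reduces to a finite explicit identity in $\overline{gr_{3}}$; once the linear system is solved, $(\alpha,\beta,\gamma)$ are pinned down and the lemma follows.
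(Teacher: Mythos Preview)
Your outline has the right flavor---build an explicit $F$ from the special quadratics and check $J_4(F)$ equals $X-Y$---but the concrete ansatz you propose cannot close. Look at the second tensor slot. Your $F_1$ contributes only $[y]$, the three wanted points $[x_ix_j|x_kx_l]$, and the $[x_i]$'s; your $F_2$ contributes only the three wanted points and the $[x_i]$'s. But each summand $q[x_i^2|x_jx_k]$ of $F_3$ has $(1-q)$ vanishing at $[x_ix_4|x_jx_k]$ \emph{and} at $[x_iy|x_jx_k]$ (the remaining two of the five points being $x_4$ and $y$), so $J_4(F_3)$ produces twelve terms of the shape $(2[x_i]-[x_j]-[x_k])^3\otimes[x_iy|x_jx_k]$. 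These twelve second-slot points are generically distinct and appear with twelve distinct first factors; nothing in $F_1$ or $F_2$ can touch them, and Lemma~\ref{l44} gives no identification among the $[x_iy|x_jx_k]$ themselves. Hence the ``overdetermined system'' forces $\gamma=0$, after which $\alpha F_1+\beta F_2$ is too small to reproduce $\Omega$ in the first slot. The system is not merely overdetermined; with this basis it is inconsistent.

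What the paper does differently, and what you are missing, is a preliminary \emph{homogeneity reduction}. One checks that $X-Y$ is invariant under simultaneously shifting all points by $[\lambda]$ (using $\Omega=\tfrac{1}{36}\sum(2[x_1]-[x_2]-[x_3])^3$), so one may set $y=\infty$. Then $[\infty]=0$ in $A$, and the right building blocks are quadratics with $\infty$ among the five reference points: the paper uses the $S_4$-symmetrized combination
\[
G=\sum q[x_1x_2|x_3\infty]+\tfrac{1}{2}\sum q[x_1^2|x_2x_3]+\tfrac{1}{4}\sum q[\infty^2|x_1x_2]-\tfrac{1}{2}\sum q[x_1^2|x_2\infty],
\]
i.e.\ \emph{four} families with fixed coefficients, not three free ones. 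The unwanted auxiliary points are now all of the form $[x_ix_j|x_k\infty]$, and with precisely these coefficients their total first-slot coefficient collapses to zero. Your $\|$-family is not used at all. If you want to keep $y$ finite you would need to enlarge the ansatz to include analogues of $q[y^2|x_ix_j]$ and $q[x_i^2|x_jy]$; otherwise, make the reduction to $y=\infty$ first.
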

\begin{proof}
Denote these elements by $X$ and $Y$. They lie in $\dfrac{Sym^3 A \otimes A}{Sym^4 A}.$ Throughout the proof we will work in $\overline{gr_3},$ so will suppose that $[a]^3\otimes [b], [a]^2[b]\otimes [a], [a]^2[b]\otimes [b], $ and $[a]^2[b]\otimes [c]- [a][b]^2\otimes [c]$ vanish.

First, notice that the difference of $X$ and $Y$ is homogeneous, that is is not changed if some $[\lambda]$ in $A$ is added to all variables in $X-Y$. To see it notice that
$$ [x_1 x_2| x_3 x_4]+[x_1 x_3| x_2 x_4]+[x_1 x_4| x_2 x_3] -[x_1] -[x_2]- [x_3]-[x_4]+[y]$$
is of degree zero and 
$$-\frac{1}{2} \sum [x_1] [x_2]^2 + \frac{1}{3} \sum [x_1] [x_2][x_3]=\frac{1}{36}\sum (2[x_1]-[x_2]-[x_3])^3.$$
From the homogeneity of $X-Y$ and the fact that all elements in the image of $J_4$ are homogenious as well it follows that it is enough to prove the statement of the lemma, supposing that $y=\infty$ and neglecting it.  

I claim that $6(X-Y)$ coincides with the image under $J_4$ of the following element $G$:
$$\sum q[x_1 x_2| x_3 \infty]+\frac{1}{2}\sum q[x_1^2| x_2 x_3]+\frac{1}{4}\sum q[\infty^2| x_1 x_2]-\frac{1}{2}\sum q[x_1^2|x_2 \infty].$$

To check it we need to compute $J_4$ of each term. I claim that

$\frac{1}{6}J_4q[x_1 x_2| x_3 \infty]=\frac{1}{6}([x_1]+[x_2]-[x_3])^3 \otimes ([x_1 x_2|x_3 x_4]+[x_4]-[x_3])=([x_1][x_2]^2-[x_1][x_2][x_3]) \otimes ([x_1 x_2|x_3 \infty]+[x_4]-[x_3]).$

$\frac{1}{6}J_4q[x_1^2| x_2 x_3 ]=-([x_1][x_2]^2+[x_2][x_3]^2+[x_3][x_1]^2-2[x_1][x_2][x_3]) \otimes ([x_2 x_3|x_1 x_4]+[x_2 x_3 |x_1 \infty]-[x_3]-[x_2]).$ 

$\frac{1}{6}J_4q[\infty^2| x_1 x_2]=-[x_1][x_2]^2\otimes([x_1x_2|x_3 \infty]+[x_1x_2|x_4 \infty])$

$\frac{1}{6}J_4q[x_1^2| x_2 \infty]=-[x_1][x_2]^2\otimes([x_1x_3|x_2 \infty]+[x_1x_4|x_2 \infty])$

For instance,  $\frac{1}{6}J_4q[x_1^2| x_2 \infty]=\frac{1}{6}(2[x_1]-[x_2]-[\infty])^3 \otimes ([x_1x_3|x_2 \infty]+[x_1x_4|x_2 \infty]-[x_2]-[\infty])=\frac{1}{6}(2[x_1]-[x_2])^3 \otimes ([x_1x_3|x_2 \infty]+[x_1x_4|x_2 \infty]-[x_2])=\frac{1}{6}(8[x_1]^3-12[x_1]^2[x_2]+6[x_1][x_2]^2-[x_2]^3)\otimes ([x_1x_3|x_2 \infty]+[x_1x_4|x_2 \infty]-[x_2])=-[x_1][x_2]^2\otimes([x_1x_3|x_2 \infty]+[x_1x_4|x_2 \infty]).$

The first thing to check is that in $6 J_4G$ all terms of the type $*\otimes [x_1,x_2|x_3,\infty]$ vanish.
For this let's combine all the terms of this type in $J_4G$. Contribution from $\sum q[x_1 x_2| x_3 \infty]$ will be $2([x_1][x_2]^2-[x_1][x_2][x_3])\otimes [x_1,x_2|x_3,\infty]$. Term $\frac{1}{2}\sum q[x_1^2| x_2 x_3]$ will give $-([x_1][x_2]^2+[x_2][x_3]^2+[x_3][x_1]^2-2[x_1][x_2][x_3]) \otimes [x_1 x_2|x_3 \infty]$. The contribution from $\frac{1}{4}\sum q[\infty^2| x_1 x_2]$ will be $-[x_1][x_2]^2\otimes [x_1 x_2|x_3 \infty]$ and $-\frac{1}{2}\sum q[x_1^2|x_2 \infty]$ will give $([x_2][x_3]^2+[x_1][x_3]^2)\otimes [x_1 x_2|x_3 \infty]$. Since 
$2([x_1][x_2]^2-[x_1][x_2][x_3])-([x_1][x_2]^2+[x_2][x_3]^2+[x_3][x_1]^2-2[x_1][x_2][x_3]) -[x_1][x_2]^2+[x_2][x_3]^2+[x_1][x_3]^2$ vanishes, the statement follows.

The remaining check of the fact that $(X-Y)$ coincides  $\frac{1}{6}J_4 G$ is straightforward.
\end{proof}

Now we are ready to prove the "five-term" identity.

\begin{lemma}\label{l46}
The image under $L_4$ of $$[a]\wedge[b] \otimes\left(\sum_{i=0}^4 (-1)^i \{r(x_0,..., \hat{x_i}, ..., x_4)\}_2\right )$$ lies in the image of $J_4.$ 
\end{lemma}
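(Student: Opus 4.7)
The goal is to prove $L_4(E)\in\mathrm{Im}(J_4)$ for $E=[a]\wedge[b]\otimes\sum_{i=0}^4(-1)^i\{r_i\}_2$, where $r_i=r(x_0,\ldots,\hat{x_i},\ldots,x_4)$. By Lemma \ref{l42}, the associated graded pieces $\overline{gr_1}$, $\overline{gr_2}$, $\overline{gr_4}$ all vanish, so it is equivalent to show that $L_4(E)=0$ in $\overline{gr_3}$. My strategy is to reduce the five-term identity to the three-pair identity already established in Lemma \ref{l45}.

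The key geometric observation is that the rational map $\phi_{a,b}(z)=(a-bz)/(1-z)$ is characterized by $r(a,\phi_{a,b}(z),b,\infty)=z$: it is inverse to the operation of taking the cross-ratio with $(a,\cdot,b,\infty)$. Using the congruence $[a]^2[b]\equiv[a][b]^2$ in $\overline{gr_3}$ established inside the proof of Lemma \ref{l43}, we get
$$L_4([a]\wedge[b]\otimes\{z\}_2)=\tfrac{1}{2}[a]^2[b]\otimes\bigl([\phi_{a,b}(z)]-[\phi_{b,a}(z)]\bigr).$$
When $z=r_i$, the point $\phi_{a,b}(r_i)$ is a specific rational expression in the five $x_j$'s. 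The bridge to Lemma \ref{l45} is that for $a,b$ drawn from the set $\{x_0,\ldots,x_4\}$, each $\phi_{a,b}(r_i)$ can be identified—via the Möbius normalisation built into the cross-ratio—with one of the fifteen involution points $[x_jx_k|x_lx_m]$ of the configuration. By bilinearity of $L_4$ in the pair $(a,b)$ it suffices to verify the desired vanishing on such choices.

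I would then apply Lemma \ref{l45} to each of the five $4$-element subsets of $\{x_0,\ldots,x_4\}$ (the omitted point playing the role of $y$), weighted by the alternating signs $(-1)^i$. Each instance produces a relation in $\mathrm{Im}(J_4)$ of the form
$$P\otimes\bigl([x_1x_2|x_3x_4]+[x_1x_3|x_2x_4]+[x_1x_4|x_2x_3]\bigr)\equiv P\otimes\bigl([x_1]+[x_2]+[x_3]+[x_4]-[y]\bigr),$$
where $P=-\tfrac{1}{2}\sum[x_1][x_2]^2+\tfrac{1}{3}\sum[x_1][x_2][x_3]$. Under the alternating sum the involution-point contributions assemble to exactly $\sum_i(-1)^i[\phi_{a,b}(r_i)]$, while the right-hand $\sum[x_j]-[y]$ tails cancel telescopically because of the alternation. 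Running the parallel computation for $\phi_{b,a}$ and combining the two gives the required vanishing of $L_4(E)$ in $\overline{gr_3}$.

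The main obstacle is the polarization step. Lemma \ref{l45} carries the $S_4$-symmetric coefficient $P$, whereas $L_4(E)$ has the unsymmetric coefficient $[a]^2[b]$. Bridging the two requires taking a carefully chosen linear combination of Lemma \ref{l45}'s specializations across $S_4$-orbits, and then exploiting the relations $[a]^2[b]\otimes[c]\equiv[a][b]^2\otimes[c]$ and $[a]^2[b]\otimes[a]\equiv 0$ in $\overline{gr_3}$ to collapse the symmetric residue into the antisymmetric wedge form supplied by $a\wedge b$. Verifying by direct computation that all residual $[x_j]$-tails cancel exactly—keeping careful track of signs from both the five-term alternation and the $S_4$ summation—is the technical heart of the proof.
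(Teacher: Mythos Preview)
Your proposal has two genuine gaps.

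First, the reduction ``by bilinearity of $L_4$ in the pair $(a,b)$ it suffices to verify the desired vanishing on such choices'' is not valid. The map $L_4$ is indeed extended linearly from basis elements $[a]\otimes[b]$, but that gives you no right to restrict to $a,b\in\{x_0,\dots,x_4\}$: you must prove the identity for \emph{every} basis pair, and the right-hand side $\tfrac12[a]^2[b]\otimes[\phi_{a,b}(z)]$ depends nonlinearly on the actual points $a,b$ through $\phi_{a,b}$. The correct reduction, which the paper uses, is quite different: since the five cross-ratios $r_i$ are $PGL_2$-invariant in the $x_j$'s, one may apply a M\"obius transformation to the configuration $(x_0,\dots,x_4)$ so that three of the points become $a,b,\infty$; the remaining two are then called $c,d$. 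This is a normalization of the $x_j$'s, not of $a,b$.

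Second, and more seriously, your polarization plan cannot be completed with Lemma~\ref{l45} alone. Each instance of Lemma~\ref{l45} carries the $S_4$-symmetric coefficient $P$; taking the five alternating specializations over omitted points still produces only $S_4$-symmetric combinations in each instance, and no linear combination of these extracts the single monomial $[a]^2[b]$ that $L_4(E)$ requires. The paper resolves this by a different mechanism: after one application of Lemma~\ref{l45} (to $a,b,c,\infty$ with $y=d$), it computes $J_4$ of the auxiliary quadratic functions $q[ba\,\|\,c\,\infty]+q[ab\,|\,c\,\infty]$ and its two cyclic permutes, obtaining three further relations of the form $([x_i]^2[x_j]-[a][b][c])\otimes(\cdots)=([x_i]^2[x_j]-[a][b][c])\otimes(2[x_k]-[d])$. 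Summing the Lemma~\ref{l45} relation with these three auxiliary relations is what collapses the symmetric $P$ down to the desired $[a]^2[b]\otimes([ca|b\infty]+[cb|a\infty]+[ab|c\infty]+[d]-[c])$. Your sketch omits these auxiliary $J_4$ computations entirely, and without them the ``polarization step'' you flag as the technical heart has no engine to run on.
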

\begin{proof}
Without loss of generality we may suppose that $x_1=a, x_2=b, x_3=c, x_4=d$ and  $x_5=\infty.$ By definition of $L$ the image of the element $a \wedge b \otimes (a,x,b,\infty)$ equals $\frac{1}{2}[a]^2[b]\otimes [x]$ and of the element $a \wedge b \otimes (b,x,a,\infty)$ equals $-\frac{1}{2}[a]^2[b]\otimes [x].$ By lemma \ref{l43}, the following equalities of cross-ratios hold:

$r(b,c,d,\infty)=r(\infty,a,[ca|b\infty],b)=r(b,[ca|b\infty],a,\infty), $

$r(a,c,d,\infty)=r(\infty,b ,[cb|a\infty],a)=r(a,[cb|a\infty],b,\infty), $

$r(a,b,d,\infty)=\frac{r(a,d ,b,\infty)}{1-r(a,d ,b,\infty)}, $

$r(a,b,c,\infty)=\frac{r(a,c ,b,\infty)}{1-r(a,c ,b,\infty)}, $

$r(a,b,c,d)=1-r(a,c ,b,\infty)=1-r(b,[ab|d\infty],a,\infty).$

So, $\sum_{i=0}^4 (-1)^i \{r(x_0,..., \hat{x_i}, ..., x_4)\}_2=\{r(b,c,d,\infty)\}_2-\{r(a,c,d,\infty)\}_2+\{r(a,b,d,\infty)\}_2-\{r(a,b,c,\infty)\}_2+\{r(a,b,c,d)\}_2.$ By lemma \ref{l43}, 

$
L_4(a\wedge b \otimes \{r(b,c,d,\infty)\}_2)=-L_4(a\wedge b \otimes \{r(a,[ca|b\infty],b,\infty)\}_2)=-\frac{1}{2}[a]^2[b]\otimes [ca|b\infty],
$

$
L_4(a\wedge b \otimes \{r(a,c,d,\infty)\}_2)=L_4(a\wedge b \otimes \{r(a,[cb|a\infty],b,\infty)\}_2)=\frac{1}{2}[a]^2[b]\otimes [cb|a\infty],
$

$
L_4(a\wedge b \otimes \{r(a,b,d,\infty)\}_2)=-L_4(a\wedge b \otimes \{r(a,d,b,\infty)\}_2)=-\frac{1}{2}[a]^2[b]\otimes [d],
$

$
L_4(a\wedge b \otimes \{r(a,b,c,\infty)\}_2)=-L_4(a\wedge b \otimes \{r(a,c,b,\infty)\}_2)=-\frac{1}{2}[a]^2[b]\otimes [c],
$

$
L_4(a\wedge b \otimes \{r(a,b,c,d)\}_2)=L_4(a\wedge b \otimes \{r(b,a,\infty,[ab,c\infty] )\}_2)=-\frac{1}{2}[a]^2[b]\otimes [ab|d\infty].
$

So, we need to show that an element 
$$P=[a]^2[b]\otimes ([ca|b\infty]+[cb|a\infty]+[ab|c\infty]+[d]-[c])$$
lies in the image on $J_4.$ For this let us apply lemma \ref{l45} to five elements 
$x_1=a,x_2=b,x_3=c, x_4=\infty, y =d.$ we will have the following relation in $\overline{gr_3}:$
$$
(-[a]^2[b]-[b]^2[c]-[c]^2[a]+[a][b][c]) \otimes ([ca|b\infty]+[cb|a\infty]+[ab|c\infty] )=
$$
$$
(-[a]^2[b]-[b]^2[c]-[c]^2[a]+[a][b][c]) \otimes ([a]+[b]+[c]-[d]).
$$
It is easy to see that image under $J_4$ of the element $[ba||c \infty]+[ab|c\infty]$
equals  
$([a]^2[b]-[a][b][c]) \otimes ([ca|b\infty]+[cb|a\infty]+[ab|c\infty] +[d]-2[c]),$ so in $\overline{gr_3}$ we have an equality
$$
(-[a]^2[b]+[a][b][c]) \otimes ([ca|b\infty]+[cb|a\infty]+[ab|c\infty])=
(-[a]^2[b]+[a][b][c]) \otimes (2[c]-[d]).
$$ 
Similarly we have
$$
([a]^2[c]-[a][b][c]) \otimes ([ca|b\infty]+[cb|a\infty]+[ab|c\infty])=
([a]^2[c]-[a][b][c]) \otimes (2[b]-[d]),
$$
$$
([b]^2[c]-[a][b][c]) \otimes ([ca|b\infty]+[cb|a\infty]+[ab|c\infty])=
([b]^2[c]-[a][b][c]) \otimes (2[a]-[d]).
$$
Summation of these four elements shows that $P$ lies in the image of $J_4.$
\end{proof}

From lemmas \ref{l43} and \ref{l46} we conclude that $L_4$  is correctly defined map from $(A \wedge A ) \otimes B_2(\mathbb{C})$, to $\dfrac{\mathbb{S}^{31}A}{ImJ_4},$ such that $L_4 \circ K_4$ vanishes on $\dfrac{\mathbb{S}^{31}A}{ImJ_4}.$ From this it follows that $Ker(K_4)=Im(J_4)$ and the theorem \ref{third} is proved.

\end{document}